\documentclass[12pt]{amsart}
\usepackage{amsfonts}
\usepackage{amsmath}
\usepackage{amssymb}
\usepackage{graphicx}
\usepackage{enumerate}
\usepackage{multicol}
\usepackage{mathrsfs}
\usepackage[usenames,dvipsnames]{pstricks}
\usepackage{epsfig}
\usepackage[hmargin=3cm,vmargin=2.5cm]{geometry}
\usepackage{lscape}
% Generated with LaTeXDraw 2.0.8
% Thu Mar 26 19:05:58 CET 2015
% \usepackage[usenames,dvipsnames]{pstricks}
\usepackage{epsfig}
\usepackage{verbatim}
\usepackage[utf8]{inputenc}
\usepackage{tikz}
\usepackage{mathdots}

\newtheorem{proposition}{Proposition}
\newtheorem{theorem}{Theorem}

\newtheorem{coro}{Corollary}
\newtheorem{lemma}{Lemma}

\DeclareMathOperator{\Cl}{Cl}

\DeclareMathOperator{\spn}{span}

\newcommand{\g}{\mathfrak{g}}
\newcommand{\h}{\mathfrak{h}}
\newcommand{\p}{\mathfrak{p}}
\newcommand{\n}{\mathfrak{n}}

%mathcal-opretators%

\DeclareMathOperator{\Hom}{\rm{Hom}}

\title[Lie algebras attached to Clifford modules]
{Lie algebras attached to Clifford modules and simple graded Lie algebras}
\author[Furutani, Godoy Molina, Markina, Morimoto, Vasil'ev]{Kenro Furutani$^{\diamondsuit}$, Mauricio Godoy Molina$^{\dag\,\ast}$, Irina Markina$^{\dag,\ddag}$,\\Tohru Morimoto, Alexander Vasil'ev$^{\dag,\ddag}$}

\address{K. Furutani: Department of Mathematics, Science University of Tokyo, Japan}
\email{furutani\_kenro@ma.noda.tus.ac.jp}
\address{M. Godoy Molina: Departamento de Matem\'atica y Estad\'istica, Universidad de la Frontera, Chile.}
\email{mauricio.godoy@ufrontera.cl}
\address{I. Markina, A. Vasil'ev: Department of Mathematics, University of Bergen, Norway.}
\email{irina.markina@uib.no, alexander.vasiliev@uib.no}
\address{T. Morimoto: Oka Mathematical Institute, Nara Women's University, Japan.}
\email{morimoto@cc.nara-wu.ac.jp}
\thanks{The first author${^{\diamondsuit}}$ is partially
supported by the Grant-in-aid for
Scientific Research (C) No. 26400124, 
Japan Society for the Promotion of Science. The author$^{\ast}$ is partially supported by Grant DI17-0147 of the Universidad de La Frontera, Chile. 
The authors$^{\dag}$ are partially supported by the grant of the Chilean Research Council Anillo ACT 1415 PIA CONICYT and EU FP7 IRSES program STREVCOMS, grant no. PIRSES-GA-2013-612669. The authors$^{\ddag}$ are partially supported by the grant of the Norwegian Research Council \#239033/F20.}

\subjclass[2010]{17B10, 17B22, 17B25, 22E46}

\keywords{Simple Lie algebras, Root system, Dynkin diagram, Graded Lie algebras, Parabolic subalgebras, $H$-type algebra, Clifford algebra, non-degenerate bi-linear form}

\begin{document}

\maketitle

\begin{abstract}
We study possible cases of complex simple graded Lie algebras of depth 2, which are the Tanaka prolongations of pseudo $H$-type Lie algebras arising through representation of Clifford algebras. We show that the complex simple Lie algebras of type $B_n$ with $|2|$-grading do not contain non-Heisenberg pseudo $H$-type Lie algebras as their negative nilpotent part, while the complex simple Lie algebras of types $A_n$, $C_n$ and $D_n$ provide such a possibility. Among exceptional algebras only $F_4$ and $E_6$ contain non-Heisenberg pseudo $H$-type Lie algebras as their negative part of $|2|$-grading. An analogous question addressed to real simple graded Lie algebras is more difficult, and we give results revealing the main differences with the complex situation.
\end{abstract}

\maketitle

\section{Introduction}

Given a finite dimensional nilpotent graded Lie algebra ${\mathfrak n}=\bigoplus_{p < 0}{\mathfrak n} _p$,
there is a uniquely associated maximal transitive graded Lie algebra 
${\mathfrak g} = \bigoplus _ { p \in \mathbb Z } \mathfrak g _p$,
such that the negative part 
$\mathfrak{g}_{-}  =\bigoplus_{p < 0} \mathfrak{g}_p$ of $\mathfrak g $ 
coincides with $\mathfrak{n}$.
This graded Lie algebra $\mathfrak{g}$ called the {\it Tanaka prolongation} may be regarded as the symmetry algebra of $\n$ and
plays a fundamental role in the study of geometric structures associated with 
differential systems of type $\mathfrak{n}$, see~\cite{Ta}. 
In particular, if the Tanaka prolongation is 
finite dimensional and simple, 
or more generally, if it satisfies the so-called condition (C) following \cite{Mo},   
one can pursuit more detailed studies by constructing Cartan connections, see \cite{Mo, Mo2,Ta2}.

A special class of nilpotent Lie algebras, called
$H$(eisenberg)-type algebras was introduced
by Kaplan in~\cite{Kaplan}. This class is
associated with the Clifford algebra generated by a vector space with
a positive definite quadratic form. A generalization related to the Clifford algebras with 
arbitrary non-degenerate quadratic forms was proposed
in~\cite{Ciatti, GKM} and received the name {\it pseudo $H$-type Lie algebras}.
We are interested in studying the Tanaka prolongation of these Lie
algebras and the geometric features of the related Lie groups.

There is a series $\{ \n^{r,s} \}$ of {\it basic} pseudo $H$-type algebras 
defined for integers $r, s \geq 0$, where 
$\n^{r,s}  =   \n_{-2} \oplus  \n_{-1} $ is a nilpotent graded Lie algebra ,  
 $ \n_{-2}  $ is endowed with a scalar product $\langle .\,,.\rangle_{r,s}$ of signature $(r, s)$,
 and 
  $\n_{-1} $ is a minimal admissible module of the Clifford algebra $\Cl ( \n_{-2},\langle .\,,.\rangle_{r,s})$. The prolongations for the basic pseudo $H$-type Lie algebras $ \n^{r,s}$, $r+s\leq 8$, were computed
by using the Maple software [Tanaka Prolongation] elaborated by Anderson~\cite{{AnI}} and the tables of structure constants 
of pseudo $H$-type Lie algebras~\cite{FM3}. The result, presented in Tables~\ref{T8} and ~\ref{T9} in Section~\ref{exAppendix},
leads us to a conjecture
that the prolongation $\rm{Prol}(\n)$
of a pseudo $H$-type Lie algebra $\n$ vanishes at order one if it does not contain a simple graded Lie algebra of depth 2. In other words the pseudo $H$-type Lie algebra has a simple graded Lie algebra of depth two as a symmetry algebra, (not involving metric), otherwise it is rigid. Inspired by this work, it was shown in~\cite{GKMV} 
that any pseudo $H$-type algebra $\n$ with the dimension of 
the centre $\n_{-2}$ strictly greater than 2 has finite Tanaka prolongation, see also~\cite{R}.
On the other hand it is known that the prolongation of a subriemannian symbol 
vanishes at order one~\cite{Mo3}. 

We pose the following question:

\begin{description}
\item[Problem 1] Find the pseudo $H$-type Lie algebras $\mathfrak{n}$, for which the Tanaka prolongation is a simple graded Lie algebra.  Equivalently, we can look for a simple graded Lie algebra $\mathfrak g$ such that its negative part
$\mathfrak g _{-}$ in $|2|$-grading is a pseudo $H$-type
algebra.
\end{description}

The problem of relative Tanaka prolongations of $H$-type Lie algebras with positive definite quadratic forms was treated in~\cite{KS}. 
The same classification question has been raised in the study of prolongations of (super-)Poincar\'e algebras, 
see~\cite{AC,AS1,AS2}. These results include some of ours as a particular case, since pseudo $H$-type algebras are contained in the wider class of so-called extended translation algebras. The fact that the class of pseudo $H$-type algebras is strictly smaller, allows our results to be more precise than those obtained in~\cite{AS2}.

The Clifford representations that yield pseudo $H$-type Lie algebras, 
so-called admissible Clifford modules, impose some restrictions 
on the dimensions of the pseudo $H$-type algebras. These dimensions 
are related by the Radon-Hurwitz-Eckmann formula.
In our present work we study Problem 1 mainly restricting ourselves to the following two problems:
\begin{description}
\item[Problem 2] Among all $|2|$-gradings $\g_{-2}\oplus\cdots\oplus\g_2$ of a complex simple Lie algebra $\g$ find those whose negative nilpotent part $\g_-=\g_{-2}\oplus\g_{-1}$ has the dimensions of $\g_{-2}$ and $\g_{-1}$ coinciding with those of the complexification of a pseudo $H$-type Lie algebra (no metric is involved).

\item[Problem 3] Among all candidates found in Problem 2, determine 
all real pseudo $H$-type Lie algebras whose complexification coincides 
with the nilpotent part $\g_{-2}\oplus\g_{-1}$ of the gradings of the complex simple Lie algebras. 

\end{description}

We find necessary conditions
for a $|2|$-grading $\g_{-2}\oplus\cdots\oplus\g_2$ of a complex simple Lie algebra $\g$ 
for the dimensions of $\g_{-1}$ and $\g_{-2}$ matching 
the dimensions of a complexified pseudo $H$-type algebra, by making use of the Radon-Hurwitz-Eckmann formula, as studied in~\cite{FM, FM2, FMV}. This way we fix Problem 2. We show that the classical complex Lie algebras of type $A_n$, $C_n$ and $D_n$ potentially may contain infinitely many pseudo $H$-type Lie algebras as a negative nilpotent part in their $|2|$-grading by checking carefully the necessary conditions according to each type. At the same time, a classical complex Lie algebra of type $B_n$ with any $|2|$-grading never contains any pseudo $H$-type Lie algebra as its negative nilpotent part except for the Heisenberg algebra. 

We also present some progress in solving Problem 3. We determine all real simple graded  Lie algebras whose negative part is a basic pseudo $H$-type algebra $\n^{r,s} $ for $r+s \leq 8$. As a typical example we show that the negative part $\g_{-2}\oplus\g_{-1}$ of the grading of $\mathfrak{su}(3,3)$ is isomorphic to the pseudo $H$-type algebra $\n=\n_{-2}\oplus \n_{-1}$ carrying a scalar product of index $(1,3)$ on the center $\n_{-2}$. On the other hand we prove that the only $H$-type Lie algebras with a positive definite bilinear form that appear as the negative part of $|2|$-gradings of ${\mathfrak{sl}}(n+1,{\mathbb R})$ are the Heisenberg algebras, see Theorem~\ref{th:positive}. 

For the exceptional simple Lie algebras we prove a remarkable fact that only $F_4$ and $E_6$ 
are the complexified prolongations of some pseudo $H$-type Lie algebras which are not the Heisenberg algebra, see Theorem~\ref{th:exH}. 
To prove that the exceptional simple Lie algebras $F_4$ and $E_6$ really
 appear as the symmetry algebras of a pseudo $H$-type algebra, 
we give a computer-aided proof by showing that 
the prolongation  of the basic pseudo $H$-type algebra $\n^{r,s}$ is isomorphic to
a real form  of  $F_4$  for $(r, s) = (7, 0) , (3, 4) $
and to that of  $E_6$  for $(r, s) = (8, 0) , (7, 1) , (4,4), (3, 5)$, see Table~\ref{T9}. 
We use the necessary condition obtained from the Radon-Hurwitz-Eckmann formula in order to finish the proof of the theorem.  

The structure of the present paper is the following.
After reviewing some preliminaries needed for the rest of the paper in Section~\ref{sec:prel}, we proceed presenting the complete answer to Problem 2 in Sections~\ref{sec:class} and~\ref{sec:excep}, for the classical complex Lie algebras and for the exceptional ones.  In Section~\ref{sec:example}, we show a concrete example answering affirmatively Problem 3, and in Section~\ref{sec:noHAn}, we prove that only the Heisenberg algebras appear as the negative part of $|2|$-gradings of ${\mathfrak{sl}}(n+1,{\mathbb R})$ among all $H$-type algebras endowed with a positive definite bilinear form. Finally, in Section~\ref{exAppendix} 
we list the growth vectors of prolongation $\rm{Prol}(\n^{r,s})$ for $r+s \leq 8$
computed by Maple, and then, 
identify the Lie algebras $\rm{Prol}(\n^{r,s})$ which are simple.

%%%%%%%%%%%%%%%%%%%%%%%%%%%%

\section{Preliminaries}\label{sec:prel}

%%%%%%%%%%%%%%%%%%%%%%%%%%%%

\subsection{Notation}\label{ssec:not}

%%%%%%%%%%%%%%%%%%%%%%%%%%%%

We follow the notational conventions of~\cite{CS}. Let $\g$ be a semisimple complex Lie algebra with a Cartan subalgebra~$\h$. Denote by $\Delta$ the root system of $\g$ with respect to $\h$, and let $\g=\h\oplus\bigoplus_{\alpha\in\Delta}\g_\alpha$ be the root decomposition of~$\g$. Suppose we have chosen a set $\Delta^+$ of positive roots, thus $\Delta=\Delta^+\cup\Delta^-$, and let $\Delta^0\subset\Delta^+$ be a set of simple roots.
We say that $\p$ is a parabolic subalgebra of $\g$ if it contains the Borel subalgebra ${\mathfrak{b}}=\h\oplus\bigoplus_{\alpha\in\Delta^+}\g_\alpha$.
Given a subset of simple roots $\Sigma\subset\Delta^0=\{\alpha_1,\dotsc,\alpha_r\}$ and a root $\alpha\in\Delta$, we define the height of $\alpha=\sum_ia_i\alpha_i$ with respect to $\Sigma$ as
\[
{\rm ht}_\Sigma(\alpha)=\sum_{i\colon\alpha_i\in\Sigma}a_i.
\]
If $\alpha_{\rm max}$ is the highest root of $\g$, then we denote $k={\rm ht}_\Sigma(\alpha_{\rm max})$. We obtain a $|k|$-grading $\g=\g_{-k}\oplus\cdots\oplus\g_0\oplus\cdots\oplus\g_k$, where
\[
\g_i=\bigoplus_{\alpha\colon{\rm ht}_\Sigma(\alpha)=i}\g_\alpha,\quad i\neq0,\quad\mbox{and}\quad\g_0=\h\oplus\bigoplus_{\alpha\colon{\rm ht}_\Sigma(\alpha)=0}\g_\alpha.
\]
In the present paper we are interested in the case $k=2$ in order to study the relation to a class of 2-step nilpotent Lie algebras, which we proceed to define.

\subsection{Pseudo $H$-type algebras and Clifford modules}\label{ssec:pseudoH}

A pseudo $H$-type algebra $\n=\n_{-2}\oplus\n_{-1}$ is a real graded 2-step nilpotent Lie algebra endowed with a non-degenerate symmetric bilinear form $\langle.\,,.\rangle=\langle.\,,.\rangle_{\n_{-2}}+\langle.\,,.\rangle_{\n_{-1}}$, such that $\langle.\,,.\rangle_{\n_{-2}}$ is non-degenerate and all the maps $J_z\in{\rm End}(\n_{-1})$ defined by
\[
\langle J_zx,y\rangle_{\n_{-1}}=\langle[x,y],z\rangle_{\n_{-2}},\quad\mbox{for all }z\in\n_{-2},\quad x,y\in\n_{-1},
\]
satisfy the Clifford condition $J_z^2=-\langle z,z\rangle_{\n_{-2}}\,{\rm Id}_{\n_{-1}}$. The last identity implies that the map $J\colon\n_{-2}\to{\rm End}(\n_{-1})$ can be extended to a representation of the Clifford algebra ${\rm Cl}(\n_{-2},\langle.\,,.\rangle_{\n_{-2}})$. 

Given a representation of the Clifford algebra $\tilde J\colon{\rm Cl}(\n_{-2},\langle.\,,.\rangle_{\n_{-2}})\to{\rm End}(V)$, where $V$ is a real vector space, we say that $V$ is admissible as a Clifford module if it admits a non-degenerate scalar product $\langle.\,,.\rangle_V$ such that
\begin{equation}\label{eq:Jmap}
\langle\tilde J_zx,y\rangle_V=-\langle x,\tilde J_zy\rangle_V,\quad \mbox{for all }z\in\n_{-2},\quad x,y\in\n_{-1}.
\end{equation}
There is a strong restriction on which Clifford modules can be admissible, as noted in~\cite{Ciatti}. In particular, not all irreducible Clifford modules can be admissible; thus we define a minimal admissible module as an admissible module of minimal dimension. Note that if $\dim\n_{-2}>1$, then $\dim\n_{-1}$ must be divisible by 4.

If $\langle.\,,.\rangle_{\n_{-2}}$ as a bilinear form has $r$ positive and $s$ negative eigenvalues, and if $\n_{-1}$ is an admissible ${\rm Cl}(\n_{-2},\langle.\,,.\rangle_{\n_{-2}})$-module of minimal dimension, then we denote by $\n^{r,s}$ the pseudo $H$-type algebra $\n_{-2}\oplus\n_{-1}$.

\subsection{Choices of roots for the classical Lie algebras}\label{ssec:chooseSigma}

In this subsection we construct $|2|$-gradings of the complex simple Lie algebras using the conventions from~\cite{B}. Following the procedure described in Section~\ref{ssec:not}, the only possibilities for $|2|$-gradings are given in Table~\ref{t:dynsig}.
{\small{
\begin{table}[h]
\caption{Dynkin diagrams and choices of $\Sigma$ for $|2|$-gradings of the classical Lie algebras}\label{t:dynsig}
\begin{tabular}{|c|c|c|}
\hline
Algebra&$\Sigma$&Dynkin diagram\\
\hline
$A_n\colon{\mathfrak{sl}}(n+1,{\mathbb C})$&$\Sigma_{i,j}=\{\alpha_i,\alpha_j\}$&
\scalebox{0.5} % Change this value to rescale the drawing.
{
\begin{pspicture}(0,-0.39919922)(10.88291,0.41919923)
\usefont{T1}{ptm}{m}{n}
\rput(0.7624707,0.22580078){$\alpha_1$}
\pscircle[linewidth=0.04,dimen=outer](0.78101563,-0.17919922){0.2}
\psline[linewidth=0.04cm](0.9810156,-0.17919922)(2.3810155,-0.17919922)
\pscircle[linewidth=0.04,dimen=outer](9.781015,-0.17919922){0.2}
\pscircle[linewidth=0.04,dimen=outer](7.9810157,-0.17919922){0.2}
\psline[linewidth=0.04cm](8.181016,-0.17919922)(9.581016,-0.17919922)
\psline[linewidth=0.04cm,linestyle=dotted,dotsep=0.16cm](6.381016,-0.17919922)(7.7810154,-0.17919922)
\psline[linewidth=0.04cm,linestyle=dotted,dotsep=0.16cm](4.5810156,-0.17919922)(5.9810157,-0.17919922)
\pscircle[linewidth=0.04,dimen=outer](2.5810156,-0.17919922){0.2}
\psline[linewidth=0.04cm,linestyle=dotted,dotsep=0.16cm](2.7810156,-0.17919922)(4.1810155,-0.17919922)
\usefont{T1}{ptm}{m}{n}
\rput(2.5624707,0.22580078){$\alpha_2$}
\usefont{T1}{ptm}{m}{n}
\rput(4.3224707,0.22580078){$\alpha_i$}
\usefont{T1}{ptm}{m}{n}
\rput(6.122471,0.22580078){$\alpha_j$}
\usefont{T1}{ptm}{m}{n}
\rput(8.032471,0.22580078){$\alpha_{n-1}$}
\usefont{T1}{ptm}{m}{n}
\rput(9.902471,0.22580078){$\alpha_n$}
\psline[linewidth=0.04cm](4.1810155,0.02080078)(4.5810156,-0.3791992)
\psline[linewidth=0.04cm](5.9810157,0.02080078)(6.381016,-0.3791992)
\psline[linewidth=0.04cm](6.381016,0.02080078)(5.9810157,-0.3791992)
\psline[linewidth=0.04cm](4.5810156,0.02080078)(4.1810155,-0.3791992)
\end{pspicture} 
}
\\
$n\geq2$&$1\leq i\leq\left[\frac{n}2\right]$&
\\
&$i< j\leq n+1-i$&
\\
\hline
$B_n\colon{\mathfrak{so}}(2n+1,{\mathbb C})$&$\Sigma_i=\{\alpha_i\}$&
\scalebox{0.5} % Change this value to rescale the drawing.
{
\begin{pspicture}(0,-0.39919922)(10.80291,0.41919923)
\pscircle[linewidth=0.04,dimen=outer](9.781015,-0.17919922){0.2}
\pscircle[linewidth=0.04,dimen=outer](7.9810157,-0.17919922){0.2}
\pscircle[linewidth=0.04,dimen=outer](6.1810155,-0.17919922){0.2}
\psline[linewidth=0.04cm](6.381016,-0.17919922)(7.7810154,-0.17919922)
\psline[linewidth=0.04cm,linestyle=dotted,dotsep=0.16cm](4.5810156,-0.17919922)(5.9810157,-0.17919922)
\pscircle[linewidth=0.04,dimen=outer](2.5810156,-0.17919922){0.2}
\psline[linewidth=0.04cm,linestyle=dotted,dotsep=0.16cm](2.7810156,-0.17919922)(4.1810155,-0.17919922)
\usefont{T1}{ptm}{m}{n}
\rput(2.5624707,0.22580078){$\alpha_2$}
\usefont{T1}{ptm}{m}{n}
\rput(4.3224707,0.22580078){$\alpha_i$}
\usefont{T1}{ptm}{m}{n}
\rput(6.292471,0.22580078){$\alpha_{n-2}$}
\usefont{T1}{ptm}{m}{n}
\rput(8.092471,0.22580078){$\alpha_{n-1}$}
\usefont{T1}{ptm}{m}{n}
\rput(9.962471,0.22580078){$\alpha_n$}
\psline[linewidth=0.04cm](8.141016,-0.13919921)(9.621016,-0.13919921)
\psline[linewidth=0.04cm](8.141016,-0.21919923)(9.621016,-0.21919923)
\psline[linewidth=0.04cm](8.981015,-0.17919922)(8.741015,-0.059199218)
\psline[linewidth=0.04cm](8.981015,-0.17919922)(8.741015,-0.29919922)
\pscircle[linewidth=0.04,dimen=outer](0.78101563,-0.17919922){0.2}
\psline[linewidth=0.04cm](0.9810156,-0.17919922)(2.3810155,-0.17919922)
\usefont{T1}{ptm}{m}{n}
\rput(0.7624707,0.22580078){$\alpha_1$}
\psline[linewidth=0.04cm](4.1810155,0.02080078)(4.5810156,-0.3791992)
\psline[linewidth=0.04cm](4.1810155,-0.3791992)(4.5810156,0.02080078)
\end{pspicture} 
}
\\
$n\geq2$&$i=2,\dotsc,n$&
\\
\hline
$C_n\colon{\mathfrak{sp}}(2n,{\mathbb C})$&$\Sigma_i=\{\alpha_i\}$&
\scalebox{0.5} % Change this value to rescale the drawing.
{
\begin{pspicture}(0,-0.39919922)(10.80291,0.41919923)
\pscircle[linewidth=0.04,dimen=outer](9.781015,-0.17919922){0.2}
\pscircle[linewidth=0.04,dimen=outer](7.9810157,-0.17919922){0.2}
\pscircle[linewidth=0.04,dimen=outer](6.1810155,-0.17919922){0.2}
\psline[linewidth=0.04cm](6.381016,-0.17919922)(7.7810154,-0.17919922)
\psline[linewidth=0.04cm,linestyle=dotted,dotsep=0.16cm](4.5810156,-0.17919922)(5.9810157,-0.17919922)
\pscircle[linewidth=0.04,dimen=outer](2.5810156,-0.17919922){0.2}
\psline[linewidth=0.04cm,linestyle=dotted,dotsep=0.16cm](2.7810156,-0.17919922)(4.1810155,-0.17919922)
\usefont{T1}{ptm}{m}{n}
\rput(2.5624707,0.22580078){$\alpha_2$}
\usefont{T1}{ptm}{m}{n}
\rput(4.3224707,0.22580078){$\alpha_i$}
\usefont{T1}{ptm}{m}{n}
\rput(6.292471,0.22580078){$\alpha_{n-2}$}
\usefont{T1}{ptm}{m}{n}
\rput(8.092471,0.22580078){$\alpha_{n-1}$}
\usefont{T1}{ptm}{m}{n}
\rput(9.962471,0.22580078){$\alpha_n$}
\psline[linewidth=0.04cm](8.141016,-0.13919921)(9.621016,-0.13919921)
\psline[linewidth=0.04cm](8.141016,-0.21919923)(9.621016,-0.21919923)
\psline[linewidth=0.04cm](8.981015,-0.29919922)(8.741015,-0.17919922)
\psline[linewidth=0.04cm](8.981015,-0.059199218)(8.741015,-0.17919922)
\pscircle[linewidth=0.04,dimen=outer](0.78101563,-0.17919922){0.2}
\psline[linewidth=0.04cm](0.9810156,-0.17919922)(2.3810155,-0.17919922)
\usefont{T1}{ptm}{m}{n}
\rput(0.7624707,0.22580078){$\alpha_1$}
\psline[linewidth=0.04cm](4.1810155,0.02080078)(4.5810156,-0.3791992)
\psline[linewidth=0.04cm](4.1810155,-0.3791992)(4.5810156,0.02080078)
\end{pspicture} 
}
\\
$n\geq3$&$i=1,\dotsc,n-1$&
\\
\hline
$D_n\colon{\mathfrak{so}}(2n,{\mathbb C})$&$\Sigma_i=\{\alpha_i\}$&
\scalebox{0.5} % Change this value to rescale the drawing.
{
\begin{pspicture}(0,-1.1080469)(11.06291,1.1080469)
\pscircle[linewidth=0.04,dimen=outer](7.9810157,-0.09035156){0.2}
\pscircle[linewidth=0.04,dimen=outer](6.1810155,-0.09035156){0.2}
\psline[linewidth=0.04cm](6.381016,-0.09035156)(7.7810154,-0.09035156)
\psline[linewidth=0.04cm,linestyle=dotted,dotsep=0.16cm](4.5810156,-0.09035156)(5.9810157,-0.09035156)
\pscircle[linewidth=0.04,dimen=outer](2.5810156,-0.09035156){0.2}
\psline[linewidth=0.04cm,linestyle=dotted,dotsep=0.16cm](2.7810156,-0.09035156)(4.1810155,-0.09035156)
\usefont{T1}{ptm}{m}{n}
\rput(2.5624707,0.31464845){$\alpha_2$}
\usefont{T1}{ptm}{m}{n}
\rput(4.3224707,0.31464845){$\alpha_i$}
\usefont{T1}{ptm}{m}{n}
\rput(6.292471,0.31464845){$\alpha_{n-3}$}
\usefont{T1}{ptm}{m}{n}
\rput(8.092471,0.31464845){$\alpha_{n-2}$}
\usefont{T1}{ptm}{m}{n}
\rput(9.89247,0.9146484){$\alpha_{n-1}$}
\psline[linewidth=0.04cm](8.141016,-0.050351564)(9.581016,0.50964844)
\psline[linewidth=0.04cm](8.141016,-0.13035156)(9.581016,-0.49035156)
\usefont{T1}{ptm}{m}{n}
\rput(9.76247,-0.88535154){$\alpha_n$}
\pscircle[linewidth=0.04,dimen=outer](9.781015,-0.49035156){0.2}
\pscircle[linewidth=0.04,dimen=outer](9.781015,0.50964844){0.2}
\pscircle[linewidth=0.04,dimen=outer](0.78101563,-0.09035156){0.2}
\psline[linewidth=0.04cm](0.9810156,-0.09035156)(2.3810155,-0.09035156)
\psline[linewidth=0.04cm](4.1810155,-0.29035157)(4.5810156,0.10964844)
\psline[linewidth=0.04cm](4.1810155,0.10964844)(4.5810156,-0.29035157)
\usefont{T1}{ptm}{m}{n}
\rput(0.7624707,0.31464845){$\alpha_1$}
\end{pspicture} 
}
\\
$n\geq4$&$i=2,\dotsc,n-2$&
\\
\cline{2-3}
&$\Sigma_{1,n}=\{\alpha_1,\alpha_n\}$&
\scalebox{0.5} % Change this value to rescale the drawing.
{
\begin{pspicture}(0,-1.1080469)(9.26291,1.1080469)
\pscircle[linewidth=0.04,dimen=outer](7.9810157,0.50964844){0.2}
\pscircle[linewidth=0.04,dimen=outer](6.1810155,-0.09035156){0.2}
\pscircle[linewidth=0.04,dimen=outer](4.381016,-0.09035156){0.2}
\psline[linewidth=0.04cm](4.5810156,-0.09035156)(5.9810157,-0.09035156)
\pscircle[linewidth=0.04,dimen=outer](2.5810156,-0.09035156){0.2}
\psline[linewidth=0.04cm,linestyle=dotted,dotsep=0.16cm](2.7810156,-0.09035156)(4.1810155,-0.09035156)
\psline[linewidth=0.04cm](0.9810156,-0.09035156)(2.3810155,-0.09035156)
\usefont{T1}{ptm}{m}{n}
\rput(0.7624707,0.31464845){$\alpha_1$}
\usefont{T1}{ptm}{m}{n}
\rput(2.5624707,0.31464845){$\alpha_2$}
\usefont{T1}{ptm}{m}{n}
\rput(4.4924707,0.31464845){$\alpha_{n-3}$}
\usefont{T1}{ptm}{m}{n}
\rput(6.292471,0.31464845){$\alpha_{n-2}$}
\usefont{T1}{ptm}{m}{n}
\rput(8.092471,0.9146484){$\alpha_{n-1}$}
\psline[linewidth=0.04cm](6.341016,-0.050351564)(7.7810154,0.50964844)
\psline[linewidth=0.04cm](6.341016,-0.13035156)(7.7810154,-0.49035156)
\usefont{T1}{ptm}{m}{n}
\rput(7.9624705,-0.88535154){$\alpha_n$}
\psline[linewidth=0.04cm](0.58101565,0.10964844)(0.9810156,-0.29035157)
\psline[linewidth=0.04cm](0.58101565,-0.29035157)(0.9810156,0.10964844)
\psline[linewidth=0.04cm](7.7810154,-0.29035157)(8.181016,-0.69035155)
\psline[linewidth=0.04cm](7.7810154,-0.69035155)(8.181016,-0.29035157)
\end{pspicture} 
}
\\
\cline{2-3}
&$\Sigma_{n-1,n}=\{\alpha_{n-1},\alpha_n\}$&
\scalebox{0.5} % Change this value to rescale the drawing.
{
\begin{pspicture}(0,-1.1080469)(9.26291,1.1080469)
\pscircle[linewidth=0.04,dimen=outer](6.1810155,-0.09035156){0.2}
\pscircle[linewidth=0.04,dimen=outer](4.381016,-0.09035156){0.2}
\psline[linewidth=0.04cm](4.5810156,-0.09035156)(5.9810157,-0.09035156)
\pscircle[linewidth=0.04,dimen=outer](2.5810156,-0.09035156){0.2}
\psline[linewidth=0.04cm,linestyle=dotted,dotsep=0.16cm](2.7810156,-0.09035156)(4.1810155,-0.09035156)
\pscircle[linewidth=0.04,dimen=outer](0.78101563,-0.09035156){0.2}
\psline[linewidth=0.04cm](0.9810156,-0.09035156)(2.3810155,-0.09035156)
\usefont{T1}{ptm}{m}{n}
\rput(0.7624707,0.31464845){$\alpha_1$}
\usefont{T1}{ptm}{m}{n}
\rput(2.5624707,0.31464845){$\alpha_2$}
\usefont{T1}{ptm}{m}{n}
\rput(4.4924707,0.31464845){$\alpha_{n-3}$}
\usefont{T1}{ptm}{m}{n}
\rput(6.292471,0.31464845){$\alpha_{n-2}$}
\usefont{T1}{ptm}{m}{n}
\rput(8.092471,0.9146484){$\alpha_{n-1}$}
\psline[linewidth=0.04cm](6.341016,-0.050351564)(7.7810154,0.50964844)
\psline[linewidth=0.04cm](6.341016,-0.13035156)(7.7810154,-0.49035156)
\usefont{T1}{ptm}{m}{n}
\rput(7.9624705,-0.88535154){$\alpha_n$}
\psline[linewidth=0.04cm](7.7810154,0.70964843)(8.181016,0.30964842)
\psline[linewidth=0.04cm](7.7810154,0.30964842)(8.181016,0.70964843)
\psline[linewidth=0.04cm](7.7810154,-0.29035157)(8.181016,-0.69035155)
\psline[linewidth=0.04cm](7.7810154,-0.69035155)(8.181016,-0.29035157)
\end{pspicture} 
}
\\
\hline
\end{tabular}
\end{table}
}}
An important remark is that since automorphisms of a Dynkin diagram induce outer automorphisms of the corresponding Lie algebra, some choices of roots yield isomorphic gradings. An example of this fact are the choices of the roots $\{\alpha_1,\alpha_{n-1}\}$ and $\{\alpha_1,\alpha_n\}$ for the Lie algebra ${\mathfrak{so}}(2n,{\mathbb C})$. In Table~\ref{t:dynsig}, only non-isomorphic graded Lie algebras appear.

Note that $A_1$ does not appear in Table~\ref{t:dynsig}, because it has only one root of height one, and thus it cannot produce a $|2|$-grading. Also we have the following exceptional isomorphisms:
\begin{itemize}
\item ${\mathfrak{so}}(3,{\mathbb C})\cong{\mathfrak{sl}}(2,{\mathbb C})$,

\item ${\mathfrak{sp}}(2,{\mathbb C})\cong{\mathfrak{sl}}(2,{\mathbb C})$,

\item ${\mathfrak{so}}(2,{\mathbb C})\cong{\mathbb C}$,

\item ${\mathfrak{sp}}(4,{\mathbb C})\cong{\mathfrak{so}}(5,{\mathbb C})$,

\item ${\mathfrak{so}}(6,{\mathbb C})\cong{\mathfrak{sl}}(4,{\mathbb C})$,

\item ${\mathfrak{so}}(4,{\mathbb C})\cong{\mathfrak{sl}}(2,{\mathbb C})\times{\mathfrak{sl}}(2,{\mathbb C})$,
\end{itemize}
which explains the choices of $n$ in each case of Table~\ref{t:dynsig}.

\section{Complexified pseudo $H$-type algebras in $|2|$-gradings of classical Lie algebras}\label{sec:class}

We employ Table~\ref{t:dynsig} in order to calculate the dimensions of $\g_{-1}$ and $\g_{-2}$ of a $|2|$-grading of a classical complex Lie algebra $\g$, summarizing the computations in the following statement.

\begin{theorem}\label{th:dimclas}
Let $\g$ be a complex simple Lie algebra, and let $\g_{-2}\oplus\cdots\oplus\g_2$ be a $|2|$-grading of $\g$. Then the dimensions of $\g_{-1}$ and $\g_{-2}$ are given in Table~\ref{t:dim}.
{\rm\small{
\begin{table}[h]
\caption{Choice of $\Sigma$ and dimensions for $|2|$-gradings of the classical Lie algebras}\label{t:dim}
\begin{tabular}{|c|c|c|c|}
\hline
Algebra&$\Sigma$&$\dim(\g_{-1})$&$\dim(\g_{-2})$\\
\hline
${\mathfrak{sl}}(n+1,{\mathbb C})$&$\Sigma_{i,j}=\{\alpha_i,\alpha_j\}$&&
\\
$n\geq2$&$1\leq i\leq\left[\frac{n}2\right]$&$(n+1-(j-i))(j-i)$&$i(n+1-j)$
\\
&$i< j\leq n+1-i$&&
\\
\hline
${\mathfrak{so}}(2n+1,{\mathbb C})$&$\Sigma_i=\{\alpha_i\}$&$i(2(n-i)+1)$&$i(i-1)/2$
\\
$n\geq2$&$i=2,\dotsc,n$&&
\\
\hline
${\mathfrak{sp}}(2n,{\mathbb C})$&$\Sigma_i=\{\alpha_i\}$&$2i(n-i)$&$i(i+1)/2$
\\
$n\geq3$&$i=1,\dotsc,n-1$&&
\\
\hline
${\mathfrak{so}}(2n,{\mathbb C})$&$\Sigma_i=\{\alpha_i\}$&$2i(n-i)$&$i(i-1)/2$
\\
$n\geq4$&$i=2,\dotsc,n-2$&&
\\
\cline{2-4}
&$\Sigma_{1,n}=\{\alpha_1,\alpha_n\}$&$n(n-1)/2$&$n-1$
\\
&&&
\\
\cline{2-4}
&$\Sigma_{n-1,n}=\{\alpha_{n-1},\alpha_n\}$&$2(n-1)$&$(n-1)(n-2)/2$
\\
&&&
\\
\hline
\end{tabular}
\end{table}
}}
\end{theorem}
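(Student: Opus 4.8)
The plan is to reduce the assertion to a purely combinatorial count of roots and then carry it out type by type using the standard realizations from~\cite{B}. Since $\g$ is simple, every root space $\g_\alpha$ is one-dimensional, so for $k\neq0$ the dimension $\dim\g_k$ equals the number of roots $\alpha\in\Delta$ with ${\rm ht}_\Sigma(\alpha)=k$. Because $\alpha\mapsto-\alpha$ reverses the sign of the height and permutes the roots, one has $\dim\g_{-k}=\dim\g_k$; hence it suffices to count the \emph{positive} roots of $\Sigma$-height $1$ and $2$. Writing each positive root in the simple basis as $\alpha=\sum_i a_i\alpha_i$, the height ${\rm ht}_\Sigma(\alpha)$ is just the sum of the coefficients $a_i$ over the one or two nodes in $\Sigma$, so the whole computation amounts to reading off these coefficients from the known expansions of the positive roots.

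First I would fix, for each type, the realization of $\Delta$ in coordinates $e_1,e_2,\dots$ together with the simple roots of Table~\ref{t:dynsig}: for $A_n$ the roots $e_a-e_b$ with $\alpha_i=e_i-e_{i+1}$; for $B_n$ the roots $\pm e_a\pm e_b$ and $\pm e_a$ with $\alpha_n=e_n$; for $C_n$ the roots $\pm e_a\pm e_b$ and $\pm2e_a$ with $\alpha_n=2e_n$; and for $D_n$ the roots $\pm e_a\pm e_b$ with $\alpha_{n-1}=e_{n-1}-e_n$, $\alpha_n=e_{n-1}+e_n$. The key expansions are that $e_a-e_b=\alpha_a+\cdots+\alpha_{b-1}$, carrying coefficient $1$ on a contiguous block of nodes, whereas the ``long'' roots $e_a+e_b$, $e_a$, $2e_a$ pick up a coefficient $2$ on a terminal block of nodes; this doubling is precisely the mechanism that produces roots of height $2$.

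With these expansions in hand, each entry of Table~\ref{t:dim} follows by counting lattice points. For $A_n$ with $\Sigma_{i,j}$, a root $e_a-e_b$ has height $2$ exactly when both nodes lie in the block $[a,b-1]$, i.e. $a\le i$ and $b>j$, giving $i(n+1-j)$ roots, while height $1$ requires exactly one of the two nodes to lie in that block, giving $(j-i)\bigl(n+1-(j-i)\bigr)$ roots. I would then repeat this bookkeeping for $B_n$, $C_n$ and the three admissible choices of $\Sigma$ for $D_n$. In each case the height-$1$ count splits as a sum over the various shapes of positive root, and the height-$2$ count comes solely from the long roots whose expansion carries coefficient $2$ at the selected node; for instance in $C_n$ the roots $2e_a$ with $a\le i$ and $e_a+e_b$ with $b\le i$ contribute $i+\binom{i}{2}=i(i+1)/2$, matching $\dim\g_{-2}$.

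The only genuinely delicate points are at the end nodes. The short/long distinction between $B_n$ and $C_n$ changes whether $\alpha_n$ enters the relevant long roots with coefficient $1$ or $2$, which is exactly what separates the two formulas $i(i-1)/2$ and $i(i+1)/2$; and the forked tail of $D_n$ forces one, for $\Sigma_{1,n}$ and $\Sigma_{n-1,n}$, to track the coefficients of $\alpha_{n-1}$ and $\alpha_n$ in $e_a+e_b$ separately according to whether $b=n$ or $b<n$. These are the places where a sign error or an off-by-one slip is easiest, so the main effort of the proof will be a careful case analysis of the expansions near the special nodes rather than any conceptual difficulty.
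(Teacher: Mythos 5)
Your plan is correct, and all the counts it carries out explicitly check against the table (for instance the $A_n$ height-one count $i(j-i)+(j-i)(n+1-j)=(j-i)\bigl(n+1-(j-i)\bigr)$ and the $C_n$ height-two count $i+\binom{i}{2}=i(i+1)/2$ are right, and the $D_n$ fork cases work out as you indicate: for $\Sigma_{1,n}$ the height-two roots are exactly $e_1+e_b$, $b=2,\dots,n$, giving $n-1$, and for $\Sigma_{n-1,n}$ they are $e_a+e_b$ with $b\le n-1$, giving $(n-1)(n-2)/2$). However, your route differs from the paper's. The paper works with the matrix realizations of the classical algebras: for ${\mathfrak{sl}}(n+1,{\mathbb C})$ with $\Sigma_{i,j}$ it exhibits the grading as a $3\times3$ block decomposition of lower/upper triangular type and simply reads off $\dim\g_{-1}$ and $\dim\g_{-2}$ as block sizes, and it then asserts that the remaining types $B_n$, $C_n$, $D_n$ follow ``similarly'' from their matrix representations, without writing out those cases. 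Your approach instead stays inside the root system: one-dimensionality of root spaces reduces everything to counting positive roots of $\Sigma$-height $1$ and $2$ via the standard coordinate expansions from~\cite{B}, with the coefficient-$2$ terminal blocks in the long roots $e_a+e_b$, $e_a$, $2e_a$ accounting for all of $\g_{\pm2}$. The two computations are of course dual to each other (root spaces correspond to matrix entries), but your version is more uniform and self-contained: it treats all four types and all choices of $\Sigma$ by one mechanism, makes the symmetry $\dim\g_{-k}=\dim\g_k$ automatic via $\alpha\mapsto-\alpha$, and in particular supplies genuine proofs for the $B_n$, $C_n$, $D_n$ rows that the paper only gestures at; what the paper's block-matrix picture buys in exchange is an immediately visible description of $\g_{-1}$ and $\g_{-2}$ as concrete matrix subspaces, which it reuses later (e.g.\ in Section~\ref{sec:noHAn}, where the subspaces $\g_{-1}^L$, $\g_{-1}^R$ and the elementary matrices $E_{k\ell}$ are taken directly from this realization).
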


\begin{proof}
The values given in Table~\ref{t:dim} can be directly found after careful analysis of the matrix representations of the corresponding Lie algebras. For the sake of clarity, let us briefly explain how to obtain diagramatically the dimensions given for the case of ${\mathfrak{sl}}(n+1,{\mathbb C})$. Considering a choice of roots $\Sigma_{i,j}$ according to Table~\ref{t:dynsig}, the induced $|2|$-grading is given schematically by~\eqref{eq:sln}.
Counting dimensions, it is easy to see that
\[
\dim\g_{-1}=(n+1-(j-i))(j-i)\quad\mbox{and}\quad\dim\g_{-2}=i(n+1-j).
\]
For the remaining three classical Lie algebras, the gradings are more involved, but using their matrix representations the computations can be made similarly. 

\[{\tiny
\begin{array}{cccccccccccc}
&&&i&&&&&&&j&
\end{array}}\]
\begin{equation}\label{eq:sln}
{\tiny\begin{array}{c}
\\
\\
i\\
\\
\\
\\
\\
j\\
\\
\\
\\
\end{array}
\left(\begin{array}{ccc|ccccc|ccc}
&&&&&&&&&&\\
&{\mathfrak{g}}_{0}&&&&{\mathfrak{g}}_{1}&&&&{\mathfrak{g}}_{2}&\\
&&&&&&&&&&\\
\hline &&&&&&&&&&\\
&&&&&&&&&&\\
&{\mathfrak{g}}_{-1}&&&&{\mathfrak{g}}_{0}&&&&{\mathfrak{g}}_{1}&\\
&&&&&&&&&&\\
&&&&&&&&&&\\
\hline &&&&&&&&&&\\
&{\mathfrak{g}}_{-2}&&&&{\mathfrak{g}}_{-1}&&&&{\mathfrak{g}}_{0}&\\
&&&&&&&&&&
\end{array}\right).}
\end{equation}
\end{proof}

As a consequence of Theorem~\ref{th:dimclas}, we recover the 
following theorem known by Boothby~\cite{Booth}, see also~\cite{Y}.  

\begin{theorem}\label{th:yamaguchiclass}
For any complex simple Lie algebra $\g$ there exists a unique contact grading, up to isomorphism, that is a $|2|$-grading $\g=\g_{-2}\oplus\cdots\oplus\g_2$ for which $\g_{-2}\oplus\g_{-1}$ is 
the complexified Heisenberg algebra.
\end{theorem}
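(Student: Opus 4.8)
The plan is to split the statement into a dimension count and a structural identification. In the present language a contact grading is a $|2|$-grading whose negative part $\g_{-2}\oplus\g_{-1}$ is the complexified Heisenberg algebra; equivalently $\dim\g_{-2}=1$, $\dim\g_{-1}$ is even, and the bracket $\g_{-1}\times\g_{-1}\to\g_{-2}$ is a nondegenerate skew form. So the theorem amounts to (i) exhibiting in each simple $\g$ a unique $|2|$-grading with $\dim\g_{-2}=1$, and (ii) checking that the induced bracket on $\g_{-1}$ is symplectic.

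For the classical algebras, step (i) is a direct reading of Table~\ref{t:dim}: imposing $\dim\g_{-2}=1$ leaves exactly one admissible $\Sigma$ in each type. For $\mathfrak{sl}(n+1,\comp)$ the equation $i(n+1-j)=1$ forces $(i,j)=(1,n)$, whence $\dim\g_{-1}=2(n-1)$; for $\mathfrak{so}(2n+1,\comp)$, $i(i-1)/2=1$ forces $i=2$ and $\dim\g_{-1}=2(2n-3)$; for $\mathfrak{sp}(2n,\comp)$, $i(i+1)/2=1$ forces $i=1$ and $\dim\g_{-1}=2(n-1)$; and for $\mathfrak{so}(2n,\comp)$ only the family $\Sigma_2$ solves $i(i-1)/2=1$ (giving $i=2$ and $\dim\g_{-1}=4(n-2)$), since the two remaining families contribute $\dim\g_{-2}=n-1$ and $(n-1)(n-2)/2$, both exceeding $1$ for $n\geq4$. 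As Table~\ref{t:dynsig} already lists pairwise non-isomorphic gradings, the single surviving choice in each type gives existence and uniqueness up to isomorphism, and in every case $\dim\g_{-1}$ is even, as needed.

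For step (ii) I would argue nondegeneracy uniformly from simplicity rather than case by case. Fix nonzero $e\in\g_2$ and $f\in\g_{-2}$, write $[x,y]=\omega(x,y)\,f$ for $x,y\in\g_{-1}$, and let $B$ be the Killing form, which pairs $\g_{-i}$ with $\g_i$ nondegenerately. Invariance of $B$ yields $\omega(x,y)\,B(f,e)=B([x,y],e)=B(x,[y,e])$, so the radical of $\omega$ equals $\{x\in\g_{-1}:B(x,\ad(e)\g_{-1})=0\}$. The contact grading is the one attached to the highest root, i.e.\ its grading element is the coroot $\theta^\vee$ of $\alpha_{\max}$, so under the $\mathfrak{sl}_2$-triple of $\alpha_{\max}$ the spaces $\g_{-1}$ and $\g_1$ are the weight $-1$ and weight $+1$ spaces, and $\ad(e)\colon\g_{-1}\to\g_1$ is therefore an isomorphism; combined with the nondegeneracy of $B$ on $\g_{-1}\times\g_1$ this forces the radical to vanish. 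Hence $\omega$ is symplectic (in particular $[\g_{-1},\g_{-1}]=\g_{-2}$), and $\g_{-2}\oplus\g_{-1}$ is the complexified Heisenberg algebra. For the classical types one may alternatively read nondegeneracy directly off the matrix models used in the proof of Theorem~\ref{th:dimclas}.

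The main obstacle is precisely step (ii): matching dimensions alone does not separate the Heisenberg algebra from a $2$-step algebra with one-dimensional centre and degenerate bracket, and the cleanest way to exclude the latter is the identification of the contact grading with the $\theta^\vee$-grading together with the resulting isomorphism $\ad(e)\colon\g_{-1}\to\g_1$. This description is type-independent, so the exceptional algebras need no separate dimension bookkeeping beyond that of Section~\ref{sec:excep}: taking $\Sigma=\{\alpha_i:\langle\alpha_i,\theta^\vee\rangle\neq0\}$ produces the unique $|2|$-grading with $\dim\g_2=1$, since any such grading must have $\g_2=\g_{\alpha_{\max}}$. This recovers Boothby's theorem in full generality.
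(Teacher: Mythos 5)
Your proposal is correct, but it proves more than the paper does and by a different route. The paper treats Theorem~\ref{th:yamaguchiclass} as a corollary of the dimension bookkeeping in Theorem~\ref{th:dimclas}: one reads off from Table~\ref{t:dim} (and, for the exceptional algebras, from Table~\ref{t:dimex} computed via the root tables of Bourbaki) that in each type exactly one choice of $\Sigma$ yields $\dim\g_{-2}=1$, and the structural identification of $\g_{-2}\oplus\g_{-1}$ with the Heisenberg algebra is delegated to the cited literature (Boothby, Yamaguchi) --- the paper ``recovers'' a known theorem rather than reproving it. Your step (i) coincides with the paper's argument for the classical types, but your step (ii) supplies exactly the piece the paper leaves implicit: a uniform, type-independent proof that the bracket $\omega$ on $\g_{-1}$ is nondegenerate, via the identification of the contact grading with the eigenspace grading of $\theta^\vee$ and the $\mathfrak{sl}_2$-fact that $\ad(e)\colon\g_{-1}\to\g_1$ is an isomorphism, combined with the Killing pairing $B(\g_{-1},\g_1)$. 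This is a genuine gain: matching dimensions alone cannot distinguish the Heisenberg algebra from a $2$-step algebra with degenerate bracket, and your argument also covers the exceptional types without case-by-case root counting, whereas the paper relies on Table~\ref{t:dimex} there.

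One small soft spot: in your last paragraph, the assertion that any $|2|$-grading with $\dim\g_2=1$ must be the $\theta^\vee$-grading is justified only by the observation $\g_2=\g_{\alpha_{\rm max}}$; this forces the degree-$2$ piece but does not by itself force $\Sigma$, since a priori two different choices of $\Sigma$ could have the same top piece. Uniqueness still needs either a short supplementary root-theoretic argument (e.g.\ showing every $\alpha_i\in\Sigma$ must satisfy $\langle\alpha_i,\theta^\vee\rangle\neq0$, using that $\theta-\alpha_i$ is a root exactly when $(\alpha_i,\theta)\neq0$) or the case-by-case verification from Tables~\ref{t:dim} and~\ref{t:dimex}, which is precisely what the paper does and what you already carried out for the classical types. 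The gap is minor and fixable, but as written the exceptional uniqueness claim is not fully proved.
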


The list of the contact gradings is given in Table~\ref{t:contactclass}
for the classical simple Lie algebras and later in Section~\ref{sec:excep}  for the exceptional  Lie algebras.
\begin{table}[h]
\caption{Contact cases in complex simple Lie algebras}\label{t:contactclass}
\begin{tabular}{|c|c|c|}
\hline
Type of $\g$&$\dim\g_{-1}$&Choice of root\\
\hline
$A_n$&$2n-2$&$\{\alpha_1,\alpha_n\}$\\
\hline
$B_n$&$4n-6$&$\{\alpha_2\}$\\
\hline
$C_n$&$2n-2$&$\{\alpha_1\}$\\
\hline
$D_n$&$4n-8$&$\{\alpha_2\}$\\
\hline
\end{tabular}
\end{table}

Given a pseudo $H$-type Lie algebra $\n=\n_{-2}\oplus\n_{-1}$, where $\n_{-1}$ is an admissible module of the Clifford algebra ${\rm Cl}(\n_{-2},\langle.\,,.\rangle_{\n_{-2}})$, there is a relation between the dimensions of $\n_{-1}$ and $\n_{-2}$ given in terms of the Radon-Hurwitz-Eckmann function $\rho$. Explicitly, if $n=u2^{4\alpha+\beta}$, where $u$ is odd and $\beta=0,1,2$ or $3$, then $\rho(n):=8\alpha+2^\beta$. For the pseudo $H$-type Lie algebra $\n$, we have that
\begin{equation}\label{eq:RHE}
\dim\n_{-2}\leq \rho(\dim\n_{-1})-1.
\end{equation}
In the case of a positive definite scalar product $\langle.\,,.\rangle_{\n_{-2}}$, it is known that equation~\eqref{eq:RHE} is a necessary and sufficient condition for the existence of an $H$-type Lie algebra of dimension $\dim\n_{-1}+\dim\n_{-2}$ with the center $\n_{-2}$, see~\cite{Kaplan}. For an indefinite non-degenerate scalar product $\langle.\,,.\rangle_{\n_{-2}}$ inequality~\eqref{eq:RHE} is only a necessary condition, see~\cite{Ciatti}. The reason behind this is that not all Clifford modules are admissible, as discussed in Section~\ref{ssec:pseudoH}.

From the definition of the Radon-Hurwitz-Eckmann function, it is clear that if $u$ is odd, then we have
 \[
 \rho(u2^r)=\rho(2^r)=
 \begin{cases}
 8s+1=2r+1, & \text{if $r=4s$},\\
  8s+2=2r, & \text{if $r=4s+1$},\\
   8s+4=2r, & \text{if $r=4s+2$},\\
    8s+8=2r+2, & \text{if $r=4s+3$}.
 \end{cases}
 \]
Based on this relation, we want to find all the nilpotent parts $\g_{-2}\oplus\g_{-1}$ of $|2|$-gradings of the classical Lie algebras which satisfy inequality~\eqref{eq:RHE} in this context, namely $\dim\g_{-2}\leq \rho(\dim\g_{-1})-1$. They are the first candidates for a positive answer to Problem~2.
From now on, we denote $d_1=\dim\g_{-1}$ and $d_2=\dim\g_{-2}$.

\subsection{The case of $A_n$}

Assume $d_2>1$. Since the dimension of $\g_{-1}$ must be divisible by $4$ we exclude the following simple algebras from further consideration.

\begin{proposition}\label{prop:trivAn}
Let the system of roots $\Sigma_{i,j}$ for ${\mathfrak{sl}}(n+1,{\mathbb C})$ is chosen. If either
\begin{enumerate}
\item both $n$ and $j-i$ are odd, or
\item $n$ is even and $j-i\equiv 2\mod 4$,
\end{enumerate}
then the negative part of the $|2|$-grading of ${\mathfrak{sl}}(n+1,{\mathbb C})$ cannot be of pseudo $H$-type.
\end{proposition}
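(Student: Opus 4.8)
The plan is to reduce the statement to an elementary divisibility check, relying on the obstruction recorded in Section~\ref{ssec:pseudoH}: if $\n=\n_{-2}\oplus\n_{-1}$ is a pseudo $H$-type algebra with $\dim\n_{-2}>1$, then $\dim\n_{-1}$ is necessarily divisible by $4$. Since complexification preserves dimensions (the complex dimension of a complexified real space equals the real dimension of the original), the same divisibility must hold for the nilpotent part $\g_{-2}\oplus\g_{-1}$ of any $|2|$-grading that is to be the complexification of such an $\n$, provided $d_2=\dim\g_{-2}>1$, which is exactly the standing assumption of this subsection. Thus it suffices to exhibit, under each of the two hypotheses, that $d_1=\dim\g_{-1}$ fails to be divisible by $4$; the conclusion then follows by contraposition.

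First I would recall from Theorem~\ref{th:dimclas} that for the choice $\Sigma_{i,j}$ one has $d_1=(n+1-(j-i))(j-i)$. Writing $m=j-i$ for brevity, this reads $d_1=(n+1-m)\,m$, and the whole argument is a computation of the $2$-adic valuation $v_2$ of this product in the two cases.

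For case (1), with both $n$ and $m$ odd, I would note that $n+1$ is even, so $n+1-m$ is odd; since $m$ is also odd, the product $d_1=(n+1-m)m$ is odd, and in particular $4\nmid d_1$. For case (2), with $n$ even and $m\equiv2\bmod4$, I would observe that $n+1$ is odd, so $n+1-m$ is odd, whereas $m=2\cdot(\text{odd})$ contributes exactly one factor of $2$; hence $v_2(d_1)=1$, i.e.\ $d_1\equiv2\bmod4$, and again $4\nmid d_1$. In both situations the necessary divisibility fails, so the negative part cannot be the complexification of a pseudo $H$-type algebra with $d_2>1$.

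There is essentially no hard step here: the only point requiring care is the bookkeeping of parities, namely keeping track of when $n+1-m$ and $m$ are odd versus even, and correctly translating the congruence $m\equiv2\bmod4$ into the statement $v_2(m)=1$. One should also be mindful that the argument uses the standing assumption $d_2>1$ in an essential way, since the divisibility-by-$4$ obstruction is vacuous in the Heisenberg case $d_2=1$; the two listed conditions are precisely those forcing $v_2(d_1)\le1$, and no appeal to the Radon--Hurwitz--Eckmann inequality~\eqref{eq:RHE} is needed for this particular exclusion.
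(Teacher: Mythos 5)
Your proposal is correct and follows exactly the paper's route: the paper states this proposition as an immediate consequence of the remark in Section~\ref{ssec:pseudoH} that $\dim\n_{-1}$ is divisible by $4$ whenever $\dim\n_{-2}>1$, combined with the formula $d_1=(n+1-(j-i))(j-i)$ from Table~\ref{t:dim}, and your parity bookkeeping showing $v_2(d_1)\le 1$ in both cases is precisely the intended argument. Your added observation that the standing assumption $d_2>1$ is needed (and is in fact automatic here, since $d_2=1$ forces $i=1$, $j=n$, which is incompatible with either hypothesis) is a correct refinement, not a deviation.
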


Given $d_2>1$, we want to find all possible choices of roots $\Sigma_{i,j}$ such that the associated negative part of the $|2|$-grading is a candidate for a pseudo $H$-type Lie algebra. 

\begin{theorem}\label{th:An}
For any $d_2>1$, there is $n\in{\mathbb N}$ and a $|2|$-grading of $\g={\mathfrak{sl}}(n+1,{\mathbb C})$, defined by some $\Sigma_{i,j}$, such that $\g_{-2}\oplus\g_{-1}$ corresponds to the complexification of a pseudo $H$-type algebra with the center of dimension $d_2$.
\end{theorem}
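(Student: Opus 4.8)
The plan is to exploit the large freedom in the parametrization of Table~\ref{t:dim} by fixing $i=1$ and letting the ``width'' $k=j-i$ and the rank vary. With $i=1$, writing $m=n+1$, the entries of Table~\ref{t:dim} become
\[
d_2=i(n+1-j)=m-1-k,\qquad d_1=(n+1-(j-i))(j-i)=(m-k)k.
\]
Solving the first relation for $m$ gives $m=d_2+1+k$, and substituting into the second yields the key identity $d_1=(d_2+1)\,k$. Thus, for a fixed $d_2$, as $k$ ranges over the positive integers the dimension $d_1$ of $\g_{-1}$ realizes \emph{every} positive multiple of $d_2+1$. First I would record that the admissibility constraints $1\le i\le[n/2]$ and $i<j\le n+1-i$ reduce, for $i=1$, to $n\ge 2$ and $1\le k\le m-2$, both of which hold automatically once $d_2>1$ and $m=d_2+1+k$.

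The second ingredient is to produce a genuine pseudo $H$-type algebra whose complexified dimensions match $(d_1,d_2)$. I would fix any signature $(r,s)$ with $r+s=d_2$ (for definiteness the positive definite one, $(r,s)=(d_2,0)$) and let $N_0$ be the dimension of a minimal admissible $\Cl(\n_{-2},\langle.\,,.\rangle_{\n_{-2}})$-module for that signature. The crucial observation is that admissible modules are closed under direct sums: if two modules satisfy~\eqref{eq:Jmap} with respect to their scalar products, then so does their orthogonal direct sum, since each $J_z$ acts block-diagonally and skew-symmetrically and the block structure preserves the Clifford condition. Hence every positive multiple $\ell N_0$ is the dimension of an admissible module, and the corresponding $\n_{-2}\oplus\n_{-1}$ is a pseudo $H$-type algebra with center of dimension $d_2$.

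It then remains to reconcile the arithmetic constraint ``$d_1$ a multiple of $d_2+1$'' coming from the grading with the Clifford-theoretic constraint ``$d_1$ a multiple of $N_0$'' coming from admissibility. I would simply take $k=N_0$, so that
\[
d_1=(d_2+1)N_0=\ell N_0\quad\text{with}\quad\ell=d_2+1,
\]
which is simultaneously a valid value of $\dim\g_{-1}$ for the grading $\Sigma_{1,\,N_0+1}$ of $\mathfrak{sl}(d_2+N_0+1,\mathbb{C})$ and the complex dimension of the complexification of the pseudo $H$-type algebra built from $\ell=d_2+1$ copies of the minimal admissible module. Since every admissible module has dimension divisible by $4$ when $d_2>1$, the value $N_0$, and hence $d_1=(d_2+1)N_0$, is divisible by $4$; consequently the chosen grading avoids the excluded cases of Proposition~\ref{prop:trivAn}, and the necessary inequality~\eqref{eq:RHE} holds for free, being a consequence of the actual existence of the constructed algebra.

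I expect the only genuinely delicate point to be this reconciliation step: the grading confines $d_1$ to the arithmetic progression of multiples of $d_2+1$, while Clifford theory confines it to the multiples of $N_0$, and one must verify that these two progressions meet at a value that is an admissible module dimension. The choice $k=N_0$ resolves this cleanly and uniformly in $d_2$, so that everything else — the index inequalities $1\le i\le[n/2]$, $i<j\le n+1-i$, and the divisibility of $d_1$ by $4$ — reduces to routine verification.
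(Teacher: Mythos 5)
Your proof is correct and follows essentially the same route as the paper's: both match $d_2=i(n+1-j)$ using Table~\ref{t:dim} (the paper allows any divisor $i$ of $d_2$, while you fix $i=1$, which is precisely the case singled out in the paper's corollary with $d_1=(d_2+1)(n-d_2)$) and then choose $n$ so that $d_1$ is a multiple of the dimension $N_0$ of a minimal admissible module. Your explicit choice $k=N_0$, together with the observation that orthogonal direct sums of admissible modules are again admissible, merely makes precise the paper's closing step that ``it is enough to choose $n$'' with this divisibility property.
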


\begin{proof}
Since $d_2=i(n+1-j)$ according to Table~\ref{t:dim}, we see that $i$ must divide $d_2$ and $j$ can be found as
$j=n+1-\frac{d_2}i$. 
By the symmetry presented in Table~\ref{t:dynsig}, if $D(d_2)$ denotes the number of divisors of $d_2$, then only the first $\left[\frac{D(d_2)+1}2\right]$ divisors $i$ of $d_2$ have to be analyzed.
It is enough to choose $n$ in such a way that 
\[
d_1=\left(i+\dfrac{d_2}i\right)\left(n+1-\left(i+\dfrac{d_2}i\right)\right)
\]
is a multiple of the dimension of some minimal admissible module for a Clifford algebra generated by a $d_2$-dimensional vector space.
\end{proof}
%As a consequence of Theorem~\ref{th:An}, we have the following corollary that presents an infinite family of $|2|$-gradings of ${\mathfrak{sl}}(n+1,{\mathbb C})$.

\begin{coro}
For any $d_2>1$, the nilpotent part of the grading of ${\mathfrak{sl}}(n+1,{\mathbb C})$ determined by $\Sigma_{1,n-d_2}$ satisfies
\[
\dim\g_{-2}=d_2, \quad\mbox{and}\quad\dim\g_{-1}=(d_2+1)(n-d_2),
\]
which correspond to the dimensions $d_2$ and $d_1$, respectively, of a pseudo $H$-type algebra $\n_{-2}\oplus\n_{-1}$, for an appropriate $n$.

If $d_2$ is a prime number, then this is a unique possibility for a pseudo $H$-type algebra to be the nilpotent part of a grading of ${\mathfrak{sl}}(n+1,{\mathbb C})$.
\end{coro}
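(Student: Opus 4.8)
The plan is to read the two dimension formulas directly off the $A_n$ row of Table~\ref{t:dim} and then reuse the construction from the proof of Theorem~\ref{th:An}. I take the divisor $i=1$ of $d_2$, which by the recipe $j=n+1-\frac{d_2}{i}$ of that proof forces $j=n+1-d_2$ (this is what pins down the root so that $\dim\g_{-2}=d_2$). Substituting $i=1$ and $j=n+1-d_2$, so that $j-i=n-d_2$, yields
\[
\dim\g_{-2}=i(n+1-j)=d_2,\qquad \dim\g_{-1}=\bigl(n+1-(j-i)\bigr)(j-i)=(d_2+1)(n-d_2).
\]
The constraints $1\le i\le\left[\frac{n}{2}\right]$ and $i<j\le n+1-i$ coming from Table~\ref{t:dynsig} reduce here to $n>d_2$, hence hold for all sufficiently large $n$.

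To realize these dimensions by a genuine pseudo $H$-type algebra I would repeat the final step of the proof of Theorem~\ref{th:An}. Fix any signature $(r,s)$ with $r+s=d_2$ and let $N$ be the dimension of a minimal admissible module of ${\rm Cl}(\n_{-2},\langle.\,,.\rangle_{\n_{-2}})$. Because the factor $d_2+1$ is fixed while $n-d_2$ ranges over all positive integers, I choose $n$ with $n-d_2$ a multiple of $N$, for instance $n=d_2+N$. Then $\dim\g_{-1}=(d_2+1)(n-d_2)$ is a multiple of $N$, so an admissible module $\n_{-1}$ of exactly that dimension exists as an orthogonal direct sum of minimal admissible modules, and the pseudo $H$-type algebra $\n_{-2}\oplus\n_{-1}$ has complexification matching $\g_{-2}\oplus\g_{-1}$. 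The divisibility of $\dim\g_{-1}$ by $4$ required when $d_2>1$ is automatic, since $N$ is already a multiple of $4$.

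For the uniqueness assertion with $d_2=p$ prime I would invoke the divisor bookkeeping already set up for Theorem~\ref{th:An}. Each candidate grading with $\dim\g_{-2}=d_2$ corresponds to a factorization $d_2=i(n+1-j)$ with $i\mid d_2$, i.e.\ to a divisor $i$ of $d_2$ together with $j=n+1-d_2/i$; the diagram automorphism $\alpha_k\mapsto\alpha_{n+1-k}$ sends $\Sigma_{i,j}$ to $\Sigma_{n+1-j,\,n+1-i}$ and so interchanges the divisor $i$ with $d_2/i$. Thus inequivalent gradings are indexed by the $\left[\frac{D(d_2)+1}{2}\right]$ divisors surviving this symmetry. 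For $d_2=p$ prime we have $D(p)=2$ with divisor set $\{1,p\}$ forming a single orbit, so $\left[\frac{D(p)+1}{2}\right]=1$, and the only grading up to isomorphism is the one attached to $i=1$, namely $\Sigma_{1,\,n+1-d_2}$.

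The only delicate point is the existence step of the second paragraph: one must arrange $(d_2+1)(n-d_2)$ to be a multiple of the minimal admissible dimension while keeping $n>d_2$. This is not a genuine obstruction, since $n-d_2$ is a free parameter, but it is worth recording explicitly which signature $(r,s)$ with $r+s=d_2$ one works with, because $N$ and the admissible module depend on it; as the matching concerns only complex dimensions, any admissible signature will serve. Everything else is direct substitution into Table~\ref{t:dim} together with the divisor count already established in the proof of Theorem~\ref{th:An}.
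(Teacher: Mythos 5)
Your proposal is correct and takes essentially the paper's route: the corollary is stated without separate proof as an immediate consequence of Theorem~\ref{th:An}, and you reuse exactly the ingredients of that theorem's proof --- the divisor $i=1$ with $j=n+1-\frac{d_2}{i}$ substituted into Table~\ref{t:dim}, the choice of $n$ making $d_1$ a multiple of the minimal admissible module dimension, and the divisor-symmetry count $\left[\frac{D(d_2)+1}{2}\right]$, which equals $1$ when $d_2$ is prime. You also implicitly repair a typo in the statement itself: consistency with the formulas $\dim\g_{-2}=d_2$ and $\dim\g_{-1}=(d_2+1)(n-d_2)$ forces the root choice $\Sigma_{1,\,n+1-d_2}$ (as in Table~\ref{t:dimAn}), exactly as your substitution shows.
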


We can use Theorem~\ref{th:An} with ease for low dimensions. All the cases for $d_2=2,\dotsc,8$ are shown in Table~\ref{t:dimAn}.

\begin{table}[h]
\caption{Examples for Theorem~\ref{th:An}}\label{t:dimAn}
\begin{tabular}{|c|c|c|c|}
\hline
$d_2$&$(i,j)$&$d_1$&Restrictions\\
\hline
$2$&$(1,n-1)$&$3(n-2)$&$n>2$, $n\equiv 2\mod 4$\\
\hline
$3$&$(1,n-2)$&$4(n-3)$&$n>3$\\
\hline
$4$&$(1,n-3)$&$5(n-4)$&$n>4$, $n\equiv 4\mod 8$\\
\hline
&$(2,n-1)$&$4(n-3)$&$n>3$, $n$ odd\\
\hline
$5$&$(1,n-4)$&$6(n-5)$&$n>5$, $n\equiv 1\mod 4$\\
\hline
$6$&$(1,n-5)$&$7(n-6)$&$n>6$, $n\equiv 6\mod 8$\\
\hline
&$(2,n-2)$&$5(n-4)$&$n>4$, $n\equiv 4\mod 8$\\
\hline
$7$&$(1,n-6)$&$8(n-7)$&$n>8$\\
\hline
$8$&$(1,n-8)$&$9(n-8)$&$n>9$, $n\equiv 0\mod 8$\\
\hline
&$(2,n-3)$&$6(n-5)$&$n>5$, $n\equiv 1\mod 4$\\
%\hline
%&&&\\
\hline
\end{tabular}
\end{table}

\subsection{The case of $B_n$}

%\begin{proposition}\label{prop:HeisBn}
%For any $n\in{\mathbb N}$ there exists a $|2|$-grading of ${\mathfrak{so}}(2n+1,{\mathbb C})$ whose nilpotent part corresponds to the Heisenberg Lie algebra.
%\end{proposition}
%
%\begin{proof}
%It is easy to see form Table~\ref{t:dim} that $d_2=1$ if and only if $i=2$, in other words, the choice of root $\Sigma_{2}=\{\alpha_2\}$. This corresponds to the Heisenberg algebra of dimension $4n-5$.
%\end{proof}

This is the simplest case to analyze, since the only chances of matching the dimensions of the negative part of any $|2|$-grading of ${\mathfrak{so}}(2n+1,{\mathbb C})$ with a pseudo $H$-type Lie algebra is the Heisenberg algebra.

\begin{theorem}\label{th:HtypeBn}
For $d_2>1$ there are no $|2|$-gradings of ${\mathfrak{so}}(2n+1,{\mathbb C})$ such that the dimensions of the negative part match the dimensions of pseudo $H$-type Lie algebras.
\end{theorem}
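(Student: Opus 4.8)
The plan is to run the Radon--Hurwitz--Eckmann necessary condition~\eqref{eq:RHE}, $d_2\le\rho(d_1)-1$, against the dimension formulas of Table~\ref{t:dim} and show it can never hold once $d_2>1$. For the grading $\Sigma_i=\{\alpha_i\}$ of ${\mathfrak{so}}(2n+1,{\mathbb C})$ we have
\[
d_1=i\bigl(2(n-i)+1\bigr),\qquad d_2=\frac{i(i-1)}{2},
\]
and since $d_2=i(i-1)/2>1$ exactly when $i\ge 3$, it suffices to rule out every $i\ge 3$.

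The key observation is that the second factor $2(n-i)+1$ of $d_1$ is odd, so the exact power of $2$ dividing $d_1$ equals the exact power of $2$ dividing $i$. Writing $i=2^r u$ with $u$ odd, we get $d_1=2^r\cdot(\text{odd})$, whence $\rho(d_1)=\rho(2^r)$ depends only on $r$. By the case formula for $\rho(2^r)$ displayed above one has the uniform bound $\rho(2^r)\le 2r+2$, and consequently
\[
\rho(d_1)-1\le 2r+1.
\]
Thus the quantity controlling the right-hand side of~\eqref{eq:RHE} grows only linearly in the $2$-adic exponent $r$, while the center dimension $d_2=i(i-1)/2$ grows quadratically in $i$. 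This mismatch is the whole content of the theorem: the odd factor of $d_1$ contributes nothing to $\rho$, so the Radon--Hurwitz bound is far too small.

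To finish I would reduce the necessary condition to the elementary inequality $i(i-1)/2\le 2r+1$ and check that it is violated for all $i\ge3$, organizing by $r$. Since $2^r\mid i$, the smallest value of $i$ in each class with $d_2>1$ is $i=3$ for $r=0$, $i=6$ for $r=1$, $i=4$ for $r=2$, and $i=8$ for $r=3$; in each case $i(i-1)/2$ already exceeds $2r+1$ (namely $3>1$, $15>3$, $6>5$, $28>7$), and for $r\ge4$ we have $i\ge2^r\ge16$, so $i(i-1)/2\ge 2^{r-1}(2^r-1)$ dwarfs $2r+1$. Hence~\eqref{eq:RHE} fails for every $i$ giving $d_2>1$. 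One can shorten the case analysis by first invoking the remark that $d_2>1$ forces $4\mid d_1$: as $2(n-i)+1$ is odd this means $4\mid i$, i.e. $r\ge2$, leaving only the cases $r=2,3,\ge4$ to dispatch as above.

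The argument has no real obstacle — the statement is genuinely easy once one notices that the odd factor leaves the $2$-adic valuation of $d_1$ equal to that of $i$. The only fiddly point is bookkeeping the smallest $i$ compatible with both $d_2>1$ and a prescribed $2$-adic valuation $r$; after that, the quadratic-versus-linear growth makes the conclusion immediate.
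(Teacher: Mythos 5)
Your proposal is correct and follows essentially the same route as the paper: both exploit that the odd factor $2(n-i)+1$ forces $\rho(d_1)=\rho(2^r)\le 2r+2$, where $r$ is the $2$-adic valuation of $i$, and then contradict the necessary condition~\eqref{eq:RHE} by the quadratic growth of $d_2=i(i-1)/2$. The paper passes directly to $r\ge2$ via the divisibility-by-$4$ requirement on $d_1$ (exactly the shortcut you mention at the end) and concludes with the single chain $d_2=u2^{r-1}(u2^r-1)\ge 2^{r-1}(2^r-1)\ge 2r+2\ge\rho(2^r)=\rho(d_1)$, so your separate treatment of $r=0,1$ is just slightly longer but equivalent bookkeeping.
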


\begin{proof}
Table~\ref{t:dim} states that $d_1=i(2(n-i)+1)$. The number $(2(n-i)+1)$ is odd, therefore $i$ must be divisible by 4. We write $i=u2^r$, where $r\geq2$ and $u$ is odd. Then, for $r\geq2$, we have
 \[
d_2=\frac{i(i-1)}2=u2^{r-1}(u2^r-1)\geq 2^{r-1}(2^r-1)\geq 2r+2\geq \rho(2^r)=\rho(d_1),
 \]
where we write $d_1=u(2(n-i)+1)2^r$. Since $d_1$ and $d_2$ must satisfy the necessary condition~\eqref{eq:RHE}, we obtain a contradiction.
\end{proof}

\subsection{The case of $C_n$}

In Table~\ref{t:Cn} one can find the possible candidates for a pseudo $H$-type Lie algebra which corresponds to the nilpotent part of a $|2|$-grading of ${\mathfrak{sp}}(2n,{\mathbb C})$ with $d_2>1$. Some explanations are needed in order to understand how Table~\ref{t:Cn} was obtained. The numbering in the second row and in the first column follow because all dimensions $d_2$ are triangular numbers, and the hypothesis $d_2>1$ implies that $d_1$ must be a multiple of four. The numbers that appear in Table~\ref{t:Cn} are solutions for $n$ to the dimension equations
\[
d_1=2i(n-i),\quad d_2=\dfrac{i(i+1)}2.
\]
The numbers in boldface are those values of $n$ for which, in addition, the Radon-Hurwitz-Eckmann condition of equation~\eqref{eq:RHE} holds. Once a number appears in boldface in a column, say in dimension $d_1=d$, the corresponding $n$ also appears in boldface in each $d_1$ multiple of $d$. Once a number appears in boldface in a row, all numbers to the left also appear in boldface.

\begin{table}[h]
\caption{Possible pseudo $H$-type algebras in the grading of ${\mathfrak{sp}}(2n,{\mathbb C})$}\label{t:Cn}
\begin{tabular}{|c||c|c|c|c|c|c|c|c|c|}
\hline
$i$&2&3&4&5&6&7&8&9&10\\
\hline
$d_{-1}\setminus d_{-2}$&3&6&10&15&21&28&36&45&55\\
\hline
\hline
4&{\bf{3}}&--&--&--&--&--&--&--&--\\
\hline
8&{\bf{4}}&--&--&--&--&--&--&--&--\\
\hline
12&{\bf{5}}&5&--&--&--&--&--&--&--\\
\hline
16&{\bf{6}}&--&6&--&--&--&--&--&--\\
\hline
20&{\bf{7}}&--&--&7&--&--&--&--&--\\
\hline
24&{\bf{8}}&{\bf{7}}&7&--&8&--&--&--&--\\
\hline
28&{\bf{9}}&--&--&--&--&--&--&--&--\\
\hline
32&{\bf{10}}&--&8&--&--&--&--&--&--\\
\hline
36&{\bf{11}}&9&--&--&9&--&--&--&--\\
\hline
40&{\bf{12}}&--&9&9&--&--&--&--&--\\
\hline
$\vdots$&$\mathbf{\vdots}$&&$\vdots$&$\vdots$&$\vdots$&$\vdots$&$\vdots$&$\vdots$&$\vdots$\\
\hline
64&{\bf{18}}&--&{\bf{12}}&--&--&--&12&--&--\\
\hline
$\vdots$&{$\vdots$}&&&$\vdots$&$\vdots$&$\vdots$&$\vdots$&$\vdots$&$\vdots$\\
\hline
640&{\bf{162}}&--&{\bf{84}}&{\bf{69}}&--&--&48&--&42\\
\hline
\end{tabular}
\end{table}

We can say more for $|2|$-gradings determined by certain special choices of roots $\Sigma_i$ of the algebra ${\mathfrak{sp}}(2n,{\mathbb C})$. 

\begin{theorem}\label{th:Cn}
For the $|2|$-grading of ${\mathfrak{sp}}(2n,{\mathbb C})$ determined by the choice of root $\Sigma_i$, the following holds:
\begin{enumerate}
\item If $n-i$ is an odd number, then only pseudo $H$-type algebras with $\dim\n_{-2}=3$ can match the dimensions of the grading where $d_1$ is divisible by 4 and this occurs for $i=2$.
\item If $n-i$ is an even number, then there are three possibilities:
\begin{itemize}
\item $n=v2^r+1$, then $i=u2^r+1$, $r\geq1$;
\item $n=v2^r$, then $i=u2^r$, $r>1$;
\item $n=2v$, then $i=2$,
\end{itemize}
where $u$ and $v$ are odd numbers.
\end{enumerate}
\end{theorem}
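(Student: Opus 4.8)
The plan is to compare the two dimensions provided by Table~\ref{t:dim}, namely $d_1=2i(n-i)$ and $d_2=i(i+1)/2$, against the Radon--Hurwitz--Eckmann necessary condition $d_2\le\rho(d_1)-1$ of~\eqref{eq:RHE}, supplemented by the divisibility $4\mid d_1$ that holds whenever $d_2>1$. The engine of the whole argument is that $\rho(d_1)$ depends only on the $2$-adic valuation of $d_1$; writing $v_2$ for that valuation, one has $v_2(d_1)=1+v_2(i)+v_2(n-i)$, and the piecewise description of $\rho$ recalled before the statement gives at once the crude bound $\rho(2^k)\le 2k+2$. Since $d_2=i(i+1)/2$ grows quadratically in $i$ while $\rho(d_1)-1$ is only linear in $v_2(d_1)$, the condition is very restrictive: in the odd case the term $v_2(n-i)$ vanishes and $i$ is forced to be tiny, whereas in the even case a large $v_2(n-i)$ is the only mechanism that can create enough room, and it is this which pins down the $2$-adic alignment between $n$ and $i$. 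I would therefore split on the parity of $n-i$, exactly as in the statement.

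For part (1) I would use that $n-i$ odd means $v_2(n-i)=0$, so $v_2(d_1)=1+v_2(i)$; the divisibility $4\mid d_1$ then forces $i$ even. Writing $i=u2^p$ with $u$ odd and $p\ge1$ gives $d_2=u2^{p-1}(u2^p+1)$ while $\rho(d_1)-1\le 2p+3$. A one-line comparison shows $d_2>2p+3$ already for $p\ge2$, and for $p=1$ the inequality $u(2u+1)\le3$ forces $u=1$. Hence the only surviving possibility is $i=2$ with $d_2=3$, which is exactly the assertion of part (1); this is the easy half.

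Part (2), where $n-i$ is even and $v_2(n-i)=q\ge1$, is where the real work lies. Now $v_2(d_1)=1+v_2(i)+q$, and I would separate the case $i$ odd (tracking $v_2(i\pm1)$, since then $i+1$ is even in $d_2=i(i+1)/2$) from the case $i$ even (setting $r=v_2(i)$). In each case the inequality $i(i+1)/2\le\rho\big(2^{\,1+v_2(i)+q}\big)-1$ has to be solved for the admissible relation between $q$ and the valuation $r$; the expected outcome is that the leading $2$-adic block of $n$ is forced to coincide with that of $i$, giving $n=v2^r+1,\ i=u2^r+1$ in the odd case and $n=v2^r,\ i=u2^r$ in the even case (with $u,v$ odd), together with the exceptional collapse $i=2$, $n=2v$ arising from the lowest-valuation regime $r=1$. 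I expect the main obstacle to be precisely the bookkeeping forced by the four-periodicity of $\rho$: because $\rho(2^k)$ equals $2k+1$, $2k$, $2k$ or $2k+2$ according to $k\bmod4$, the comparison must be organized along the residue of $1+v_2(i)+q$ modulo $4$, and one has to peel off by hand the small-valuation configurations (those reducing to $d_2=3$) before the regular alignment argument applies. A convenient way to keep this under control is to fix $r$ first and treat $q=v_2(n-i)$ as the running parameter, so that each subcase becomes a single one-variable inequality in $q$.
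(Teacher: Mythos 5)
Your treatment of part (1) is complete and is, in substance, identical to the paper's: the divisibility $4\mid d_1$ (valid since $d_2>1$) forces $i=u2^p$ with $p\geq 1$, the uniform bound $\rho(2^k)\leq 2k+2$ eliminates $p\geq2$ against the quadratic growth of $d_2=u2^{p-1}(u2^p+1)$, and the exact value $\rho(4)=4$ settles $p=1$, leaving only $i=2$, $d_2=3$. No objection there.

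Part (2), however, is a program rather than a proof, and this is a genuine gap: you write down the inequality $i(i+1)/2\leq\rho(2^{1+v_2(i)+q})-1$ but never solve it; the conclusion is announced as the ``expected outcome.'' Moreover, the plan as stated would stall at the decisive point, because the step that actually does the work is not a one-variable inequality in $q=v_2(n-i)$ with $q$ running freely, but the arithmetic observation that \emph{misalignment pins $q$}: writing $n=v2^s+\epsilon$ and $i=u2^r+\epsilon$ with $\epsilon\in\{0,1\}$, $u,v$ odd, and $r\neq s$, one has $q=\min(r,s)$, so for $r<s$ the valuation of $d_1$ is exactly $r+1$ (odd case) respectively $2r+1$ (even case); comparing $\rho(2^{r+1})\leq 2r+4$, resp.\ $\rho(2^{2r+1})\leq 4r+4$, with $d_2\geq(2^r+1)(2^{r-1}+1)$, resp.\ $d_2\geq 2^{r-1}(2^r+1)$, produces the contradiction with~\eqref{eq:RHE}, and the case $s<r$ only strengthens the inequalities. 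This is precisely how the paper argues. By contrast, in the aligned regime $r=s$ the parameter $q$ is unbounded (in the even case $q=1+v_2(v-u)+v_2(i)-1$ can be made arbitrarily large), the inequality in $q$ is then satisfiable, and no contradiction arises --- those are exactly the surviving bullets; so ``fix $r$ and run $q$'' cannot by itself force the $2$-adic alignment. Two further executional points you defer: the boundary regimes require exact evaluations, not the crude bound --- the paper uses $\rho(32)=10$ at $r=2$ and $\rho(8)=8$ at $r=1$, the latter yielding $u(2u+1)\leq 7$, hence $u=1$ and the exceptional $i=2$ (which in the paper's proof occurs inside the regime $r=1<s$, i.e.\ with $n=v2^s$, $s\geq2$, not literally from $n=2v$). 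Finally, your anticipated mod-$4$ bookkeeping is overstated: the paper never organizes cases by the residue of $v_2(d_1)$; the uniform bound $\rho(2^k)\leq 2k+2$ plus the three exact values above suffice throughout.
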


\begin{proof}
Suppose $n-i$ is odd and $i\geq2$. Then $i$ can be written as $i=u2^r$, where $u$ is odd. Since $d_1=2i(n-i)$ must be divisible by four, we deduce that $r\geq1$. It follows that $d_1=v2^{r+1}$, where $v=(n-i)u$ is an odd number. Therefore, using the Radon-Hurwitz-Eckmann function, we obtain
\[
\rho(d_1)=\rho(2^{r+1})\leq2(r+1)+2=2r+4.
\]
On the other hand,
\[
d_2=\frac{i(i+1)}2=u2^{r-1}(u2^r+1)\geq2^{r+1}(2^r+1).
\]
It follows that $d_2>(2r+4)-1$, for $r\geq2$, contradicting equation~\eqref{eq:RHE}.

If $r=1$, then $d_2=u(2u+1)$, but $\rho(d_1)=\rho(4v)=\rho(4)=4$. Thus $d_2>3$ for all $u>1$, which again contradicts equation~\eqref{eq:RHE}.

The remaining case corresponds to $i=2$, $d_2=3$, and $d_1$ divisible by~4.

Suppose now that  $n-i$ is even. Then the proof for the rest of the cases is the following.
\begin{itemize}
\item Let $n=v2^s+1$ and $i=u2^r+1$, where $1\leq r<s$ are integers and $u$ and $v$ are odd. We have that
$
d_1=2^{r+1}(u2^r+1)(v2^{s-r}-u)$,
and therefore, $\rho(d_1)=\rho(2^{r+1})\leq2(r+1)+2$. On the other hand,
\begin{align*}
d_2=(u2^r+1)(u2^{r-1}+1)&\geq(2^r+1)(2^{r-1}+1)\\&>2(r+1)-1\geq\rho(d_1)-1,
\end{align*}
which contradicts equation~\eqref{eq:RHE}.

\item Let $n=v2^s$ and $i=u2^r$ for integers $r,s$, $1\leq r<s$ and odd $u$ and $v$. We have that $d_1=2^{2r+1}u(v2^{s-r}-u)$, and therefore $\rho(d_1)=\rho(2^{2r+1})\leq2(2r+1)+2$. On the other hand,
\[
d_2=u2^{r-1}(u2^{r}+1)\geq2^{r-1}(2^r+1)>4r+3\geq\rho(d_1)-1,
\]
for all $r>2$, which contradicts equation~\eqref{eq:RHE}. For $r=2$ we have that $\rho(d_1)=\rho(32)=10$, but $d_2\geq10>\rho(d_1)-1=9$, which contradicts equation~\eqref{eq:RHE}.

\item In the case $r=1$, we have that $i=2u$ and $d_1=8u(v2^{s-1}-u)$, thus, $\rho(d_1)=8$ and $d_2=u(2u+1)\leq\rho(d_1)-1=7$ only for $u=1$.
\end{itemize}

If $s<r$, then all previous inequalities become even stronger.
\end{proof}

\subsection{The case of $D_n$}

\begin{theorem}\label{th:HtypeDn1}
Under the assumption $d_2>1$, the only possible pseudo $H$-type algebra occurs for $\Sigma_{4,5}$ among $|2|$-gradings of ${\mathfrak{so}}(2n,{\mathbb C})$ defined by the choice of roots
$\Sigma_{n-1,n}$ and $\Sigma_{1,n}$.
\end{theorem}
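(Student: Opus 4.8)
The plan is to handle the two families of gradings, $\Sigma_{1,n}$ and $\Sigma_{n-1,n}$, separately, in each case feeding the dimensions from Table~\ref{t:dim} into the Radon--Hurwitz--Eckmann inequality~\eqref{eq:RHE}, namely $d_2 \leq \rho(d_1) - 1$, together with the divisibility constraint $4 \mid d_1$ forced by $d_2 > 1$ (Section~\ref{ssec:pseudoH}). The mechanism I would exploit throughout is that $\rho$ grows only logarithmically: writing $m = u 2^k$ with $u$ odd gives $\rho(m) = \rho(2^k) \leq 2k + 2 \leq 2\log_2 m + 2$, whereas $d_2$ grows at least linearly in $n$ in both families. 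Hence I expect~\eqref{eq:RHE} to survive only for a handful of small $n$, to be isolated by tracking $2$-adic valuations.

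For $\Sigma_{1,n}$, where $d_1 = n(n-1)/2$ and $d_2 = n - 1$, I would first use $4 \mid d_1$, i.e. $8 \mid n(n-1)$, together with $\gcd(n, n-1) = 1$, to force $n \equiv 0$ or $n \equiv 1 \pmod 8$. In the first case I write $n = 2^a m$ with $m$ odd and $a \geq 3$; then $n-1$ is odd, the $2$-adic valuation of $d_1$ is $a - 1$, so $\rho(d_1) = \rho(2^{a-1}) \leq 2a$, while $d_2 = n - 1 \geq 2^a - 1$. Since $2^a > 2a$ for $a \geq 3$, this violates~\eqref{eq:RHE}. The case $n \equiv 1 \pmod 8$ is entirely symmetric after setting $n - 1 = 2^a m$. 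This should rule out the whole family $\Sigma_{1,n}$.

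For $\Sigma_{n-1,n}$, where $d_1 = 2(n-1)$ and $d_2 = (n-1)(n-2)/2$, the constraint $4 \mid d_1$ forces $n$ odd, so $n - 1$ is even; I write $n - 1 = 2^a m$ with $m$ odd and $a \geq 1$, giving $d_1 = 2^{a+1} m$, hence $\rho(d_1) = \rho(2^{a+1}) \leq 2a + 4$, and $d_2 = 2^{a-1} m (2^a m - 1)$ since $n - 2$ is odd. For $a \geq 3$ the value $d_2 \geq 2^{a-1}(2^a - 1)$ overwhelms $\rho(d_1) - 1$, so~\eqref{eq:RHE} fails. The borderline small valuations need care: for $a = 1$ one gets $\rho(d_1) = 4$ and $d_2 = m(2m - 1)$, which is $\leq 3$ only when $m = 1$, but then $d_2 = 1$, excluded by $d_2 > 1$; for $a = 2$ one gets $\rho(d_1) = \rho(8) = 8$ and $d_2 = 2m(4m-1) \leq 7$, which holds only for $m = 1$. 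The surviving case is therefore $a = 2$, $m = 1$, i.e. $n = 5$, which is exactly $\Sigma_{n-1,n} = \Sigma_{4,5}$ with $(d_1, d_2) = (8, 6)$ and indeed $6 \leq \rho(8) - 1 = 7$.

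The computations with $2$-adic valuations are routine; the one place I expect to need genuine care is the bookkeeping of the small-valuation cases $a = 1, 2$ in the $\Sigma_{n-1,n}$ family, since the entire content of the theorem sits in the single borderline survivor $a = 2$, $m = 1$. The main conceptual obstacle is not any one estimate but rather making the logarithmic-versus-linear (respectively, versus-quadratic) growth comparison airtight across all residue classes simultaneously; the explicit valuation tracking is precisely what converts the asymptotic intuition into a rigorous finite check.
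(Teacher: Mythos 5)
Your proposal is correct and takes essentially the same approach as the paper: both feed the dimensions of Table~\ref{t:dim} into the Radon--Hurwitz--Eckmann inequality~\eqref{eq:RHE} and run a case analysis on the $2$-adic valuation of $d_1$, isolating $n=5$ (that is, $\Sigma_{4,5}$ with $(d_1,d_2)=(8,6)$ and $6\leq\rho(8)-1=7$) as the unique survivor. The only cosmetic difference is that you impose $4\mid d_1$ and the mod-$8$ classification up front, whereas the paper splits directly on the valuation $r$ of $d_1$ (its $r$ equals your $a+1$ in the $\Sigma_{n-1,n}$ family) and on the parity of $n$ in the $\Sigma_{1,n}$ family.
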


\begin{proof}
In the case $\Sigma_{n-1,n}$, the dimensions are $d_1=2(n-1)$ and $d_2=\frac{(n-1)(n-2)}2$, following Table~\ref{t:dim}. Let $2(n-1)=u2^r$, where $u$ is odd. Let us study four different cases, according to whether $r>3$ or $r=1,2,3$.
\begin{itemize}
\item Assuming that $r>3$ we have that
\[
d_2\geq 2^{r-2}(2^{r-1}-1)\geq 2r+2\geq\rho(2^r)=\rho(d_1).
\]
\item If $r=1$, then $d_1=2(n-1)=2u$ implies that $n-1=u\geq3$. Thus, $d_2=\frac12u(u-1)\geq 2=\rho(2u)=\rho(d_1)$.

\item If $r=2$, then $d_1=4u$ and $n-1=2u\geq3$. Then
\[
d_2=u(2u-1)\geq 4=\rho(4u)=\rho(d_1).
\]
\item If $r=3$, then $d_1=8u$ and $n-1=4u\geq3$. Thus, for $u\geq3$, we have that $d_2=2u(4u-1)\geq 8=\rho(8u)=\rho(d_1)$.
\end{itemize}

In all the cases above, we obtained a contradiction with the necessary condition~\eqref{eq:RHE}. It remains to consider the case $r=3$, $u=1$; therefore, $d_1=8$, which implies $n=5$. This is our exceptional case.

In the case $\Sigma_{1,n}$, the dimensions are $d_1=\frac{n(n-1)}2$ and $d_2=n-1$. Let $d_1=u2^r$, for $r\geq2$. Consider two possibilities
\begin{itemize}
\item If $n$ is even, then $n=v2^{r+1}$, with $v$ odd. Thus,
\[
d_2=v2^{r+1}-1\geq2r+2\geq\rho(2^r)=\rho(d_1).
\]

\item If $n-1$ is even, then $n-1=v2^{r+1}$, with $v$ odd. Then
\[
d_2=v2^{r+1}\geq2r+2\geq\rho(2^r)=\rho(d_1).
\]
\end{itemize}
So we obtain again a contradiction with the necessary condition~\eqref{eq:RHE}.
\end{proof}

If we choose the roots $\Sigma_i=\{\alpha_i\}$, $i=2,\dotsc,n-2$, then the table of allowed dimensions for the negative nilpotent part of $|2|$-gradings of ${\mathfrak{so}}(2n,{\mathbb C})$ with respect to the necessary condition~\eqref{eq:RHE} is the same as Table~\ref{t:Cn}, with the shift of 1 to the left in the row of $i$. This is easy to see comparing the $C_n$ case with $D_n$ in Table~\ref{t:dim}. Thus, Theorem~\ref{th:Cn} holds true for ${\mathfrak{so}}(2n,{\mathbb C})$.

\section{Complexified pseudo $H$-type algebras in $|2|$-gradings of the exceptional Lie algebras}\label{sec:excep}

We start by constructing $|2|$-gradings for the exceptional Lie algebras, using the conventions from~\cite{B}. The definitions presented in Section~\ref{ssec:not} show that the only possibilities for $|2|$-gradings of the five exceptional Lie algebras are given in Table~\ref{t:dynsigex}. To obtain the dimension of $\g_{-k}$, $k=1,2$,  of the $|2|$-grading 
it suffices to count the number of roots whose root spaces generate
$\g_{-k}$, and this is done with the tables of roots for exceptional Lie algebras
in~\cite{B}. The results are summarized in Table~\ref{t:dimex}, see also partial results in~\cite{Som}.
Note that each exceptional Lie algebra contains a unique contact grading, as it is stated in Theorem~\ref{th:yamaguchiclass}.  

{\small{
\begin{table}[h]
\caption{Dynkin diagrams and choices of $\Sigma$ for $|2|$-gradings of the exceptional Lie algebras}\label{t:dynsigex}
\begin{tabular}{|c|c|c|}
\hline
Algebra&$\Sigma$&Dynkin diagram\\
\hline
$E_6$&$\Sigma_{2}=\{\alpha_2\}$&
\scalebox{0.5} % Change this value to rescale the drawing.
{
\begin{pspicture}(0,-1.5080469)(8.80291,1.5080469)
\usefont{T1}{ptm}{m}{n}
\rput(0.7624707,1.3146484){$\alpha_1$}
\pscircle[linewidth=0.04,dimen=outer](0.78101563,0.9096484){0.2}
\psline[linewidth=0.04cm](0.9810156,0.9096484)(2.3810155,0.9096484)
\pscircle[linewidth=0.04,dimen=outer](4.381016,-0.89035153){0.2}
\pscircle[linewidth=0.04,dimen=outer](2.5810156,0.9096484){0.2}
\usefont{T1}{ptm}{m}{n}
\rput(2.5624707,1.3146484){$\alpha_3$}
\usefont{T1}{ptm}{m}{n}
\rput(4.3624706,1.3146484){$\alpha_4$}
\usefont{T1}{ptm}{m}{n}
\rput(6.162471,1.3146484){$\alpha_5$}
\usefont{T1}{ptm}{m}{n}
\rput(7.9624705,1.3146484){$\alpha_6$}
\psline[linewidth=0.04cm](2.7810156,0.9096484)(4.1810155,0.9096484)
\pscircle[linewidth=0.04,dimen=outer](4.381016,0.9096484){0.2}
\psline[linewidth=0.04cm](4.5810156,0.9096484)(5.9810157,0.9096484)
\pscircle[linewidth=0.04,dimen=outer](6.1810155,0.9096484){0.2}
\psline[linewidth=0.04cm](6.381016,0.9096484)(7.7810154,0.9096484)
\pscircle[linewidth=0.04,dimen=outer](7.9810157,0.9096484){0.2}
\psline[linewidth=0.04cm](4.381016,0.70964843)(4.381016,-0.69035155)
\usefont{T1}{ptm}{m}{n}
\rput(4.3624706,-1.2853515){$\alpha_2$}
\end{pspicture} 
}
\\
%\cline{2-3}
&$\Sigma_{3}=\{\alpha_3\}$&\\
%\cline{2-3}
&$\Sigma_{1,6}=\{\alpha_1,\alpha_6\}$&\\
\hline
$E_7$&$\Sigma_{1}=\{\alpha_1\}$&
\scalebox{0.5} % Change this value to rescale the drawing.
{
\begin{pspicture}(0,-1.5080469)(10.60291,1.5080469)
\usefont{T1}{ptm}{m}{n}
\rput(0.7624707,1.3146484){$\alpha_1$}
\pscircle[linewidth=0.04,dimen=outer](0.78101563,0.9096484){0.2}
\psline[linewidth=0.04cm](0.9810156,0.9096484)(2.3810155,0.9096484)
\pscircle[linewidth=0.04,dimen=outer](4.381016,-0.89035153){0.2}
\pscircle[linewidth=0.04,dimen=outer](2.5810156,0.9096484){0.2}
\usefont{T1}{ptm}{m}{n}
\rput(2.5624707,1.3146484){$\alpha_3$}
\usefont{T1}{ptm}{m}{n}
\rput(4.3624706,1.3146484){$\alpha_4$}
\usefont{T1}{ptm}{m}{n}
\rput(6.162471,1.3146484){$\alpha_5$}
\usefont{T1}{ptm}{m}{n}
\rput(7.9624705,1.3146484){$\alpha_6$}
\psline[linewidth=0.04cm](2.7810156,0.9096484)(4.1810155,0.9096484)
\pscircle[linewidth=0.04,dimen=outer](4.381016,0.9096484){0.2}
\psline[linewidth=0.04cm](4.5810156,0.9096484)(5.9810157,0.9096484)
\pscircle[linewidth=0.04,dimen=outer](6.1810155,0.9096484){0.2}
\psline[linewidth=0.04cm](6.381016,0.9096484)(7.7810154,0.9096484)
\pscircle[linewidth=0.04,dimen=outer](7.9810157,0.9096484){0.2}
\psline[linewidth=0.04cm](4.381016,0.70964843)(4.381016,-0.69035155)
\usefont{T1}{ptm}{m}{n}
\rput(4.3624706,-1.2853515){$\alpha_2$}
\psline[linewidth=0.04cm](8.181016,0.9096484)(9.581016,0.9096484)
\pscircle[linewidth=0.04,dimen=outer](9.781015,0.9096484){0.2}
\usefont{T1}{ptm}{m}{n}
\rput(9.76247,1.3146484){$\alpha_7$}
\end{pspicture} 
}
\\
%\cline{2-3}
&$\Sigma_{2}=\{\alpha_2\}$&
\\
%\cline{2-3}
&$\Sigma_{6}=\{\alpha_6\}$&
\\
\hline
$E_8$&$\Sigma_{1}=\{\alpha_1\}$&
\scalebox{0.5} % Change this value to rescale the drawing.
{
\begin{pspicture}(0,-1.5080469)(12.40291,1.5080469)
\usefont{T1}{ptm}{m}{n}
\rput(0.7624707,1.3146484){$\alpha_1$}
\pscircle[linewidth=0.04,dimen=outer](0.78101563,0.9096484){0.2}
\psline[linewidth=0.04cm](0.9810156,0.9096484)(2.3810155,0.9096484)
\pscircle[linewidth=0.04,dimen=outer](4.381016,-0.89035153){0.2}
\pscircle[linewidth=0.04,dimen=outer](2.5810156,0.9096484){0.2}
\usefont{T1}{ptm}{m}{n}
\rput(2.5624707,1.3146484){$\alpha_3$}
\usefont{T1}{ptm}{m}{n}
\rput(4.3624706,1.3146484){$\alpha_4$}
\usefont{T1}{ptm}{m}{n}
\rput(6.162471,1.3146484){$\alpha_5$}
\usefont{T1}{ptm}{m}{n}
\rput(7.9624705,1.3146484){$\alpha_6$}
\psline[linewidth=0.04cm](2.7810156,0.9096484)(4.1810155,0.9096484)
\pscircle[linewidth=0.04,dimen=outer](4.381016,0.9096484){0.2}
\psline[linewidth=0.04cm](4.5810156,0.9096484)(5.9810157,0.9096484)
\pscircle[linewidth=0.04,dimen=outer](6.1810155,0.9096484){0.2}
\psline[linewidth=0.04cm](6.381016,0.9096484)(7.7810154,0.9096484)
\pscircle[linewidth=0.04,dimen=outer](7.9810157,0.9096484){0.2}
\psline[linewidth=0.04cm](4.381016,0.70964843)(4.381016,-0.69035155)
\usefont{T1}{ptm}{m}{n}
\rput(4.3624706,-1.2853515){$\alpha_2$}
\psline[linewidth=0.04cm](8.181016,0.9096484)(9.581016,0.9096484)
\pscircle[linewidth=0.04,dimen=outer](9.781015,0.9096484){0.2}
\usefont{T1}{ptm}{m}{n}
\rput(9.76247,1.3146484){$\alpha_7$}
\psline[linewidth=0.04cm](9.981015,0.9096484)(11.381016,0.9096484)
\pscircle[linewidth=0.04,dimen=outer](11.581016,0.9096484){0.2}
\usefont{T1}{ptm}{m}{n}
\rput(11.56247,1.3146484){$\alpha_8$}
\end{pspicture} 
}
\\
%\cline{2-3}
&$\Sigma_{8}=\{\alpha_8\}$&
\\
\hline
$F_4$&$\Sigma_{1}=\{\alpha_1\}$&
\scalebox{0.5} % Change this value to rescale the drawing.
{
\begin{pspicture}(0,-0.3791992)(7.00291,0.41919923)
\pscircle[linewidth=0.04,dimen=outer](4.381016,-0.17919922){0.2}
\pscircle[linewidth=0.04,dimen=outer](6.1810155,-0.17919922){0.2}
\psline[linewidth=0.04cm](4.5810156,-0.17919922)(5.9810157,-0.17919922)
\pscircle[linewidth=0.04,dimen=outer](2.5810156,-0.17919922){0.2}
\usefont{T1}{ptm}{m}{n}
\rput(2.5624707,0.22580078){$\alpha_2$}
\usefont{T1}{ptm}{m}{n}
\rput(4.3624706,0.22580078){$\alpha_3$}
\usefont{T1}{ptm}{m}{n}
\rput(6.162471,0.22580078){$\alpha_4$}
\psline[linewidth=0.04cm](2.7410157,-0.13919921)(4.2210155,-0.13919921)
\psline[linewidth=0.04cm](2.7410157,-0.21919923)(4.2210155,-0.21919923)
\psline[linewidth=0.04cm](3.5810156,-0.17919922)(3.3410156,-0.07919922)
\psline[linewidth=0.04cm](3.5810156,-0.15919922)(3.3410156,-0.2791992)
\pscircle[linewidth=0.04,dimen=outer](0.78101563,-0.17919922){0.2}
\psline[linewidth=0.04cm](0.9810156,-0.17919922)(2.3810155,-0.17919922)
\usefont{T1}{ptm}{m}{n}
\rput(0.7624707,0.22580078){$\alpha_1$}
\end{pspicture} 
}
\\
%\cline{2-3}
&$\Sigma_{4}=\{\alpha_4\}$&
\\
\hline
$G_2$&$\Sigma_{2}=\{\alpha_2\}$&
\scalebox{0.5} % Change this value to rescale the drawing.
{
\begin{pspicture}(0,-0.3791992)(3.4029102,0.41919923)
\pscircle[linewidth=0.04,dimen=outer](2.5810156,-0.17919922){0.2}
\pscircle[linewidth=0.04,dimen=outer](0.78101563,-0.17919922){0.2}
\usefont{T1}{ptm}{m}{n}
\rput(0.7624707,0.22580078){$\alpha_1$}
\usefont{T1}{ptm}{m}{n}
\rput(2.5624707,0.22580078){$\alpha_2$}
\psline[linewidth=0.04cm](0.9410156,-0.09919922)(2.4210157,-0.09919922)
\psline[linewidth=0.04cm](0.9410156,-0.25919923)(2.4210157,-0.25919923)
\psline[linewidth=0.04cm](1.8210156,-0.33919922)(1.5410156,-0.17919922)
\psline[linewidth=0.04cm](1.8210156,0.0)(1.5410156,-0.17919922)
\psline[linewidth=0.04cm](0.9410156,-0.17919922)(2.3810155,-0.17919922)
\end{pspicture} 
}
\\
\hline
\end{tabular}
\end{table}
}}

\begin{table}[h]
\caption{Choice of $\Sigma$ and dimensions for $|2|$-gradings of the exceptional Lie algebras}\label{t:dimex}
\begin{tabular}{|c|c|c|c|}
\hline
Algebra&$\Sigma$&$\dim(\g_{-1})$&$\dim(\g_{-2})$\\
\hline
&$\Sigma_2$&$20$&1\\
$E_6$&$\Sigma_3$&{{20}}&{{5}}\\
&$\Sigma_{1,6}$&${\bf16}$&$\bf{8}$\\
\hline
&$\Sigma_1$&$32$&$1$\\
$E_7$&$\Sigma_2$&$35$&$7$\\
&$\Sigma_6$&$32$&$10$\\
\hline
$E_8$&$\Sigma_1$&$64$&$14$\\
$E_8$&$\Sigma_8$&$56$&$1$\\
\hline
$F_4$&$\Sigma_1$&$14$&$1$\\
$F_4$&$\Sigma_4$&{\bf{8}}&{\bf{7}}\\
\hline
$G_2$&$\Sigma_2$&$4$&1
\\
\hline
\end{tabular}
\end{table}

%Some information is known about gradings of real forms of the exceptional algebras, see for example~\cite{GKN,S,WW1,WW2}, but without any relation to an extra algebraic structure that the negative part of the $|2|$-gradings might have. Reading out from Table~\ref{t:dimex}, we can complete the task of ruling out certain $|2|$-grading of a complex simple Lie algebra which do not admit pseudo $H$-type algebras as their negative part.

\begin{theorem}\label{th:exH}
Only $(E_6,  \Sigma _{1, 6} )$ and  $(F_4, \Sigma _4 ) $  are the exceptional simple graded Lie algebras
$\g = \bigoplus \g_p $ whose negative part $\g_-$  is the complexification 
$\n \otimes \mathbb C$ of  a pseudo $H$-type Lie algebra
$ \n =  \n _{-2} + \n_{-1} $ with dim $\n_{-2} > 1$.
\end{theorem}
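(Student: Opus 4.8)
The plan is to split the argument into two directions: a necessity part that narrows the list of exceptional $|2|$-gradings down to exactly the two claimed cases, and a sufficiency part that confirms these two genuinely come from pseudo $H$-type algebras. The necessity part is elementary bookkeeping with the two dimension constraints, while the sufficiency part is where the real content lies.

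For necessity, I would start from Table~\ref{t:dimex}, which records $d_1=\dim\g_{-1}$ and $d_2=\dim\g_{-2}$ for every $|2|$-grading of an exceptional Lie algebra. The hypothesis $\dim\n_{-2}>1$ immediately discards every contact grading (all of which have $d_2=1$), leaving six candidates: $(E_6,\Sigma_3)$ with $(d_1,d_2)=(20,5)$, $(E_6,\Sigma_{1,6})$ with $(16,8)$, $(E_7,\Sigma_2)$ with $(35,7)$, $(E_7,\Sigma_6)$ with $(32,10)$, $(E_8,\Sigma_1)$ with $(64,14)$, and $(F_4,\Sigma_4)$ with $(8,7)$. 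To each I would apply the two conditions that any complexified pseudo $H$-type algebra with $d_2>1$ must satisfy: that $d_1$ is divisible by $4$ (noted in Section~\ref{ssec:pseudoH}), and the Radon--Hurwitz--Eckmann inequality~\eqref{eq:RHE}, namely $d_2\leq\rho(d_1)-1$.

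Running through the six candidates settles necessity. The case $(E_7,\Sigma_2)$ is eliminated at once since $d_1=35$ is odd. For the remaining five, $d_1$ is either a power of two or $20=5\cdot2^2$, so $\rho(d_1)=\rho(2^r)$ is read off from the displayed formula for $\rho(u2^r)$: one gets $\rho(20)=4$, $\rho(32)=10$, $\rho(64)=12$, which rules out $(E_6,\Sigma_3)$ since $5>3$, $(E_7,\Sigma_6)$ since $10>9$, and $(E_8,\Sigma_1)$ since $14>11$. Only $(E_6,\Sigma_{1,6})$, where $\rho(16)=9$ and $8\leq8$, and $(F_4,\Sigma_4)$, where $\rho(8)=8$ and $7\leq7$, survive, so no other exceptional grading can have a complexified pseudo $H$-type negative part.

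The harder and more substantial part is sufficiency: the two survivors must actually be realized by pseudo $H$-type algebras, not merely be compatible in dimension. Since~\eqref{eq:RHE} is only necessary in the indefinite case, matching dimensions alone does not force $\g_-$ to carry a pseudo $H$-type bracket, and this is where I expect the main obstacle to be. I would resolve it by exhibiting explicit pseudo $H$-type algebras whose Tanaka prolongation is a real form of the target exceptional algebra: the basic algebras $\n^{r,s}$ with $r+s=7$ match $(d_1,d_2)=(8,7)$ for $F_4$, and those with $r+s=8$ match $(d_1,d_2)=(16,8)$ for $E_6$. The verification itself I would carry out with the computer-aided prolongation computations recorded in Table~\ref{T9}, confirming that $\mathrm{Prol}(\n^{r,s})$ is a real form of $F_4$ for $(r,s)=(7,0),(3,4)$ and of $E_6$ for $(r,s)=(8,0),(7,1),(4,4),(3,5)$; complexifying then identifies $\g_-$ with the complexified pseudo $H$-type algebra and completes the proof.
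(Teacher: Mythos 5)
Your proposal is correct and follows essentially the same route as the paper: exclude the contact gradings from Table~\ref{t:dimex}, rule out the remaining candidates by the Radon--Hurwitz--Eckmann inequality~\eqref{eq:RHE}, and establish existence for $(E_6,\Sigma_{1,6})$ and $(F_4,\Sigma_4)$ via the computer-aided prolongation computations of $\mathrm{Prol}(\n^{r,s})$ recorded in Table~\ref{T9}. The only cosmetic difference is that you eliminate $(E_7,\Sigma_2)$ by the divisibility-by-4 condition on $d_1=35$, while the paper uses $\rho(35)=1$ in~\eqref{eq:RHE}; the two are interchangeable here.
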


\begin{proof}
It follows from Table~\ref{t:dimex} and from the Radon-Hurwitz-Eckmann inequality~\eqref{eq:RHE} that 
only $(E_6,  \Sigma _{1, 6} )$ and  $(F_4, \Sigma _4 ) $  are the exceptional simple graded Lie algebras
that satisfy the necessary conditions. In fact, among the graded exceptional Lie algebras
in Table~\ref{t:dimex}, the contact gradings are excluded from the candidates. Computing the 
number $\rho (\dim \g _{-1} ) - 1 - \dim \g_{-2} $ for the other gradings, we note that
only $(E_6,  \Sigma _{1, 6} )$ and  $(F_4, \Sigma _4 ) $ satisfy the  Radon-Hurwitz-Eckmann inequality~\eqref{eq:RHE},
since $\rho(x) = 4, 9, 1, 10, 12, 8$ for $x = 20, 16, 15, 32, 64, 8 $.
 
Now we show by a computer-aided proof that there actually exist pseudo H-type algebras 
satisfying Theorem~\ref{th:exH}. The prolongation $\rm{Prol}( \n^{8, 0}) $ of
$ \n^{8, 0} $ is computed by making use of the Maple program [TanakaProlongation]. It was found that the growth vector of $\rm{Prol}( \n^{8, 0}) $ is
 $ (8, 16, 30, 16, 8) $ and that the prolongation is semi-simple.
Since the minimal  admissible module of $\Cl_{8,0} $ is irreducible, the prolongation $\rm{Prol}( \n^{8, 0}) $ is simple. Due to the fact that a complex simple graded Lie algebra having this growth vector is only $(E_6,  \Sigma _{1, 6} )$,
we conclude that the prolongation $\rm{Prol}( \n^{8,0})$ is a real form of $(E_6,  \Sigma _{1, 6 } )$. 
  
Computing the prolongations of $\n^{7,1} $, $\n^{4,4}$, and $\n^{3, 5}$,
one finds that $\rm{Prol}(\n^{7,1}) $,  $\rm{Prol}(\n^{4,4})$, and $\rm{Prol}(\n^{3,5}) $ are also real forms of $(E_6,  \Sigma_{1, 6} )$.

Again by the Maple calculations one can see that the growth vectors of prolongations 
$\rm{Prol}( \n^{7,0} )$ and  $\rm{Prol}( \n^{3, 4}) $ are $(7, 8, 22, 8, 7) $, from 
which we conclude that  $\rm{Prol}( \n^{7,0}) $ and $\rm{Prol}( \n^{3, 4} )$
are real forms of $(F_4, \Sigma _4 )$. 
\end{proof}
%Some results related to the gradings of real forms of the exceptional algebras were considered also in~\cite{GKN,KAs,S,WW1,WW2}.

%%%%%%%%%%%%%%%%%%%%%%%%%%%%%%%%%

\section{A real example: $\n^{1,3}$ inside ${\mathfrak{su}}(3,3)$}\label{sec:example}

%%%%%%%%%%%%%%%%%%%%%%%%%%%%%%%%%

Consider the Lie algebra ${\mathfrak{su}}(3,3)$, which is a real form of the complex simple Lie algebra ${\mathfrak{sl}}(6,{\mathbb C})$, defined as
\[
{\mathfrak{su}}(3,3)=\{X\in{\mathfrak{gl}}(6,{\mathbb C})\colon X+\sigma X^*\sigma=0\},
\]
where $X^*$ denotes the transposed conjugate matrix of $X$ and
\[
\sigma=\left(\begin{array}{cccccc}0&0&0&0&0&1\\0&0&0&0&1&0\\0&0&0&1&0&0\\0&0&1&0&0&0\\0&1&0&0&0&0\\1&0&0&0&0&0\end{array}\right).
\]

The $|2|$-grading of ${\mathfrak{sl}}(6,{\mathbb C})$ induced by the choice of roots $\Sigma_{2,4}$ gives a nilpotent Lie algebra $\g_{-2}\oplus\g_{-1}$ with dimensions $\dim\g_{-1}=8$ and $\dim\g_{-2}=4$.
This grading has its natural counterpart $\n_{-2}\oplus\n_{-1}$ in the grading of the real Lie algbra ${\mathfrak{su}}(3,3)$ satisfying the following statement. 

\begin{theorem}
The graded 2-step nilpotent Lie algebra $\n_{-2}\oplus\n_{-1}$ is isomorphic to the pseudo $H$-type algebra $\n^{1,3}$ with $\n_{-1}$ as the minimal admissible module of ${\rm Cl}(\n_{-2},\langle.\,,.\rangle_{\n_{-2}})$, where $\langle.\,,.\rangle_{\n_{-2}}$ is the scalar product of signature $(1,3)$.
\end{theorem}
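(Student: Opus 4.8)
The plan is to carry out everything inside the block decomposition of $\mathfrak{sl}(6,\mathbb C)$ determined by $\Sigma_{2,4}$, writing a matrix as a $3\times3$ array of $2\times2$ blocks $A_{pq}$, $1\le p,q\le3$. In this block form the structure matrix of $\mathfrak{su}(3,3)$ is $\sigma=\bigl(\begin{smallmatrix}0&0&\tau\\0&\tau&0\\\tau&0&0\end{smallmatrix}\bigr)$ with $\tau=\bigl(\begin{smallmatrix}0&1\\1&0\end{smallmatrix}\bigr)$, so the reality condition $X+\sigma X^{*}\sigma=0$ becomes $A_{pq}=-\tau A_{4-q,\,4-p}^{*}\tau$. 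First I would read off the negative part: $\n_{-2}$ is carried by the single block $A_{31}$ subject to $A_{31}=-\tau A_{31}^{*}\tau$, a real four-dimensional space, while $\n_{-1}$ is carried by a free block $A_{21}=P$ with $A_{32}=-\tau P^{*}\tau$ determined, a real eight-dimensional space. This recovers the dimensions of Theorem~\ref{th:dimclas} and fixes explicit real coordinates on $\n_{-2}$ and $\n_{-1}$.

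Next I would make the bracket explicit as a matrix commutator. For $V,W\in\n_{-1}$ with free blocks $P$ and $Q$ in position $(2,1)$, a direct block multiplication shows that $[V,W]$ has only its $(3,1)$ block nonzero, equal to $\tau\bigl(Q^{*}\tau P-P^{*}\tau Q\bigr)$, and one checks that this matrix again satisfies the reality condition defining $\n_{-2}$. Thus the bracket closes and the whole Lie algebra structure of $\n_{-2}\oplus\n_{-1}$ is rendered completely explicit in the chosen coordinates.

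The heart of the matter is to produce scalar products on $\n_{-1}$ and $\n_{-2}$ turning this into a pseudo $H$-type structure and to compute the signature on the center. A point worth stressing is that the obvious positive definite trace forms $\tr(VW^{*})$ and $\tr(ZZ^{*})$ are \emph{not} the correct choice: were the center form definite, each $J_z$ would be skew for a definite form on $\n_{-1}$, forcing $J_z^{2}$ to be negative semidefinite and contradicting $J_z^{2}=-\langle z,z\rangle_{\n_{-2}}\Id$ along the negative directions. Both forms must therefore be indefinite, and they can be built from the $(3,3)$ Hermitian structure of $\mathfrak{su}(3,3)$ by inserting $\tau$ into the trace pairing; for instance $\langle V,W\rangle_{\n_{-1}}=\re\tr(P\tau Q^{*}\tau)$ is a neutral symmetric form of signature $(4,4)$. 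Fixing this form on $\n_{-1}$, the operators $A_\xi\in\operatorname{End}(\n_{-1})$ for $\xi\in\n_{-2}^{*}$, defined by $\langle A_\xi x,y\rangle_{\n_{-1}}=\xi([x,y])$, depend on nothing else; requiring that for a suitable symmetric form $\langle\cdot,\cdot\rangle_{\n_{-2}}$ the associated maps $J_z$ obey the Clifford relation $J_z^{2}=-\langle z,z\rangle_{\n_{-2}}\Id$ both determines that form and gives its signature. The skew-symmetry~\eqref{eq:Jmap} is then automatic, from the symmetry of the two scalar products and the antisymmetry of the bracket, so the Clifford relation is the only substantive identity, and in the block coordinates it collapses to $2\times2$ computations in $\tau$.

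The main obstacle is the final sign count. Diagonalising the quadratic form $z\mapsto\langle z,z\rangle_{\n_{-2}}$ on an adapted basis of the four-dimensional center, I expect exactly one positive and three negative eigenvalues; this is what distinguishes signature $(1,3)$ from $(3,1)$, $(2,2)$ or a definite one, and it is here that the specific real form $\mathfrak{su}(3,3)$, through the interplay of $\tau$ with complex conjugation, enters decisively. With the signature $(1,3)$ established I would conclude by uniqueness: the minimal admissible $\Cl(\n_{-2},\langle\cdot,\cdot\rangle_{\n_{-2}})$-module for this signature has real dimension $8=\dim\n_{-1}$, so $z\mapsto J_z$ realises $\n_{-1}$ as a minimal admissible module and the bracket found above is exactly the one defining $\n^{1,3}$; hence $\n_{-2}\oplus\n_{-1}\cong\n^{1,3}$, with the signature $(1,3)$ scalar product living on its center. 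As an independent check, one may match the structure constants obtained here against the tabulated structure constants of $\n^{1,3}$ from~\cite{FM3}.
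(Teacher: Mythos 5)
Your block formalism is sound and agrees with the paper's data: the reality condition $A_{pq}=-\tau A_{4-q,4-p}^{*}\tau$, the identification of $\n_{-2}$ and $\n_{-1}$ with the $(3,1)$ and $(2,1)$ blocks, and the bracket formula $[V,W]_{31}=\tau(Q^{*}\tau P-P^{*}\tau Q)$ all check out, as does the observation that the skew-symmetry \eqref{eq:Jmap} is automatic. The genuine gap is your concrete Ansatz for the scalar product on $\n_{-1}$: the sesquilinear form $\langle V,W\rangle_{\n_{-1}}=\re\tr(P\tau Q^{*}\tau)$ is \emph{not} admissible, and with it the program collapses before the ``final sign count'' you defer to. Indeed, writing a functional on $\n_{-2}$ as $\xi(Z)=\re\tr(NZ_{31})$ with $N$ in the real form $E_{-}=\{N:\ N=-\tau N^{*}\tau\}$ (the $E_{+}$ part pairs to zero), your own definition $\langle A_{\xi}V,W\rangle_{\n_{-1}}=\xi([V,W])$ forces, after two cyclic-trace manipulations, $A_{\xi}\colon P\mapsto 2PN$, i.e.\ right multiplication by an arbitrary element of the four-dimensional space $E_{-}$. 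By Cayley--Hamilton, $N^{2}=(\tr N)N-(\det N)\Id$ is scalar only if $N$ is scalar or traceless; but $E_{-}$ consists of matrices $\bigl(\begin{smallmatrix}\alpha&i\beta\\ i\gamma&-\bar\alpha\end{smallmatrix}\bigr)$ with $\beta,\gamma\in\mathbb{R}$, so e.g.\ $N=i\bigl(\begin{smallmatrix}1&1\\1&1\end{smallmatrix}\bigr)\in E_{-}$ gives $A_{\xi}^{2}\colon P\mapsto-8P(\Id+\tau)$, which is not a scalar operator. Since for any non-degenerate form on $\n_{-2}$ every $\xi\in\n_{-2}^{*}$ arises as some $\langle\cdot,z\rangle_{\n_{-2}}$, the Clifford relation $J_{z}^{2}=-\langle z,z\rangle_{\n_{-2}}\Id$ fails for some $z$ no matter which center form you try; so your claim that the Clifford relation ``determines that form and gives its signature'' is vacuous for this choice on $\n_{-1}$.

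The correct scalar product is not built from the Hermitian pairing but from the $\mathbb{C}$-bilinear determinant pairing: in your block coordinates the paper's form is $\langle V,W\rangle_{\n_{-1}}=-\tfrac12\re\tr\bigl(P\,\mathrm{adj}(Q)\bigr)$, the polarization of $-\re\det P$ (also of signature $(4,4)$, but genuinely different from yours — on the paper's basis vector $e_{2}$, whose block is $\bigl(\begin{smallmatrix}0&-i\\ i&0\end{smallmatrix}\bigr)$, yours gives $-2$ while the paper's gives $+1$). The paper avoids this trap by proceeding entirely explicitly: it writes down an orthonormal basis $e_{1},\dotsc,e_{8}$ of $\n_{-1}$ with signs $(+,+,+,+,-,-,-,-)$ and $z_{1},\dotsc,z_{4}$ of $\n_{-2}$ with signs $(-,-,-,+)$, tabulates all commutators, computes the eight-by-eight matrices $J_{z_{i}}$, and verifies $J_{z_{1}}^{2}=J_{z_{2}}^{2}=J_{z_{3}}^{2}=-J_{z_{4}}^{2}=\Id_{\n_{-1}}$ together with $J_{z_{i}}J_{z_{j}}+J_{z_{j}}J_{z_{i}}=0$, which directly exhibits the signature $(1,3)$. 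Your scaffolding (dimension counts, closure of the bracket, minimality of the admissible module in dimension $8$) would indeed streamline that verification, but as written the one substantive input of your proof — the admissible form on $\n_{-1}$ — is wrong, and the decisive Clifford computation and signature count are asserted rather than performed.
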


\begin{proof}

Let $E_{ij}$ denote the $(6\times6)$-matrix with 1 at the position $(i,j)$ and $0$ elsewhere. The set of matrices
\begin{equation}\label{eq:basisn1}
\begin{array}{ll}
e_1 = E_{41} - E_{63}+E_{32} - E_{54} ,& e_2 = i (E_{41} + E_{63}-E_{32} - E_{54}),\\
e_3 = E_{31} - E_{64} - E_{42} + E_{53} ,& e_4 = i (E_{31} + E_{64}+E_{42} + E_{53}) ,\\
e_5 = E_{41} - E_{63} - E_{32} + E_{54},& e_6 = i (E_{41} + E_{63}+E_{32} + E_{54}),\\
e_7 =  E_{31} - E_{64} + E_{42} - E_{53},& e_8 = i (E_{31} + E_{64}-E_{42} - E_{53}) ,
\end{array}
\end{equation}
is a basis of $\n_{-1}$, and
\begin{equation}\label{eq:basisn2}
\begin{array}{ll}
z_1=2(E_{51}-E_{62}),&z_2=2i(E_{51}+E_{62}),\\
z_3=2i(E_{52}-E_{61}),&z_4=2i(E_{52}+E_{61}),
\end{array}
\end{equation}
is a basis of $\n_{-2}$. 
Consider the scalar product $\langle.\,,.\rangle_{\n_{-1}}$ on $\n_{-1}$ by declaring the basis~\eqref{eq:basisn1} to be orthonormal, where
\[
\langle e_1,e_1\rangle_{\n_{-1}}=\langle e_2,e_2\rangle_{\n_{-1}}=\langle e_3,e_3\rangle_{\n_{-1}}=\langle e_4,e_4\rangle_{\n_{-1}}=1, 
\]
\[
\langle e_5,e_5\rangle_{\n_{-1}}=\langle e_6,e_6\rangle_{\n_{-1}}=\langle e_7,e_7\rangle_{\n_{-1}}=\langle e_8,e_8\rangle_{\n_{-1}}=-1.
\]
Similarly, consider a scalar product $\langle.\,,.\rangle_{\n_{-2}}$ on $\n_{-2}$ by declaring the basis~\eqref{eq:basisn2} to be orthonormal. Namely
\[
-\langle z_1,z_1\rangle_{\n_{-2}}=-\langle z_2,z_2\rangle_{\n_{-2}}=-\langle z_3,z_3\rangle_{\n_{-2}}=\langle z_4,z_4\rangle_{\n_{-2}}=1.
\]
The following is the commutator table of $\n_{-2}\oplus\n_{-1}$ in terms of the above chosen basis $\{e_1,\dotsc,e_8,z_1,\dotsc,z_4\}$, using the usual commutators of matrices:
{\small{
\begin{table}[h]
\begin{tabular}{|c|c|c|c|c|c|c|c|c|}
\hline
$$&$e_1$&$e_2$&$e_3$&$e_4$&$e_5$&$e_6$&$e_7$&$e_8$\\
\hline
$e_1$&0&0&0&$-z_4$&$-z_1$&0&$-z_2$&$z_3$\\
\hline
$e_2$&0&0&$-z_4$&0&0&$z_1$&$z_3$&$z_2$\\
\hline
$e_3$&0&$z_4$&0&0&$-z_2$&$-z_3$&$z_1$&0\\
\hline
$e_4$&$z_4$&0&0&0&$-z_3$&$z_2$&0&$-z_1$\\
\hline
$e_5$&$z_1$&0&$z_2$&$z_3$&0&0&0&$-z_4$\\
\hline
$e_6$&0&$-z_1$&$z_3$&$-z_2$&0&0&$-z_4$&0\\
\hline
$e_7$&$z_2$&$-z_3$&$-z_1$&0&0&$z_4$&0&0\\
\hline
$e_8$&$-z_3$&$-z_2$&0&$z_1$&$z_4$&0&0&0\\
\hline
\end{tabular}
\end{table}
}}

For each $z\in\n_{-2}$, we define the map $J_z\in{\rm End}(\n_{-1})$ by means of identity~\eqref{eq:Jmap}. In terms of the given basis of $\n_{-1}$, we can easily compute the following matrix representation of $J_{z_i}$, $i=1,2,3,4$.
{\small{
\[
J_{z_1}=\left(
\begin{array}{cccccccc}
 0 & 0 & 0 & 0 & 1 & 0 & 0 & 0 \\
 0 & 0 & 0 & 0 & 0 & -1 & 0 & 0 \\
 0 & 0 & 0 & 0 & 0 & 0 & -1 & 0 \\
 0 & 0 & 0 & 0 & 0 & 0 & 0 & 1 \\
 1 & 0 & 0 & 0 & 0 & 0 & 0 & 0 \\
 0 & -1 & 0 & 0 & 0 & 0 & 0 & 0 \\
 0 & 0 & -1 & 0 & 0 & 0 & 0 & 0 \\
 0 & 0 & 0 & 1 & 0 & 0 & 0 & 0 \\
\end{array}
\right)\!\!,\,
J_{z_2}=\left(
\begin{array}{cccccccc}
 0 & 0 & 0 & 0 & 0 & 0 & 1 & 0 \\
 0 & 0 & 0 & 0 & 0 & 0 & 0 & -1 \\
 0 & 0 & 0 & 0 & 1 & 0 & 0 & 0 \\
 0 & 0 & 0 & 0 & 0 & -1 & 0 & 0 \\
 0 & 0 & 1 & 0 & 0 & 0 & 0 & 0 \\
 0 & 0 & 0 & -1 & 0 & 0 & 0 & 0 \\
 1 & 0 & 0 & 0 & 0 & 0 & 0 & 0 \\
 0 & -1 & 0 & 0 & 0 & 0 & 0 & 0 \\
\end{array}
\right)\!,
\]}}
{\small{\[
J_{z_3}=\left(
\begin{array}{cccccccc}
 0 & 0 & 0 & 0 & 0 & 0 & 0 & -1 \\
 0 & 0 & 0 & 0 & 0 & 0 & -1 & 0 \\
 0 & 0 & 0 & 0 & 0 & 1 & 0 & 0 \\
 0 & 0 & 0 & 0 & 1 & 0 & 0 & 0 \\
 0 & 0 & 0 & 1 & 0 & 0 & 0 & 0 \\
 0 & 0 & 1 & 0 & 0 & 0 & 0 & 0 \\
 0 & -1 & 0 & 0 & 0 & 0 & 0 & 0 \\
 -1 & 0 & 0 & 0 & 0 & 0 & 0 & 0 \\
\end{array}
\right)\!\!,\,
J_{z_4}=\left(
\begin{array}{cccccccc}
 0 & 0 & 0 & 1 & 0 & 0 & 0 & 0 \\
 0 & 0 & 1 & 0 & 0 & 0 & 0 & 0 \\
 0 & -1 & 0 & 0 & 0 & 0 & 0 & 0 \\
 -1 & 0 & 0 & 0 & 0 & 0 & 0 & 0 \\
 0 & 0 & 0 & 0 & 0 & 0 & 0 & -1 \\
 0 & 0 & 0 & 0 & 0 & 0 & -1 & 0 \\
 0 & 0 & 0 & 0 & 0 & 1 & 0 & 0 \\
 0 & 0 & 0 & 0 & 1 & 0 & 0 & 0 \\
\end{array}
\right)\!.
\]}}

Simple computations show that
\[
J_{z_1}^2=J_{z_2}^2=J_{z_3}^2=-J_{z_4}^2={\rm Id}_{\n_{-1}}\quad\mbox{and}\quad J_{z_i}J_{z_j}+J_{z_j}J_{z_i}=0\quad\text{for}\quad i\neq j.
\]
This implies that $J_z^2=-\langle z,z\rangle_{\n_{-2}}{\rm Id}_{\n_{-1}}$, for all $z\in\n_{-2}$, and thus, the 2-step nilpotent Lie algebra $\n_{-2}\oplus\n_{-1}$ is the pseudo $H$-type algebra~$\n^{1,3}$.
\end{proof}

\section{Uniqueness of the Heisenberg algebra in ${\mathfrak{sl}}(n+1,{\mathbb R})$}\label{sec:noHAn}

The aim of this section is to prove that among all $|2|$-gradings $\g_{-2}\oplus\cdots\oplus\g_2$ of $\g={\mathfrak{sl}}(n+1,{\mathbb R})$ the only $H$-type algebras with a positive definite scalar product that appear as $\g_{-2}\oplus\g_{-1}$ are the Heisenberg algebras. To prove this, we will use an alternative definition of $H$-type algebras found in~\cite{Kaplan}. A real graded 2-step nilpotent Lie algebra $\n=\n_{-2}\oplus\n_{-1}$ endowed with a positive definite scalar product $\langle.\,,.\rangle$ is of $H$-type if and only if the map
$
{\rm ad}_x\colon\ker({\rm ad}_x)^\bot\to\n_{-2}
$
is a surjective isometry for any $x\in\n_{-1}$ with $\langle x,x\rangle=1$, where $^\bot$ stands for the orthogonal complement in $\n_{-1}$.

Let us assume that the $|2|$-grading is induced by the choice of roots $\Sigma_{i,j}$. With analogous notation as in Section~\ref{sec:example}, define the following subspaces of ${\mathfrak{g}}_{-1}$
\[
{\mathfrak{g}}_{-1}^L={\rm span}\{E_{k\ell}\colon i<k\leq j,1\leq\ell\leq i\},
\]
\[ 
{\mathfrak{g}}_{-1}^R={\rm span}\{E_{k\ell}\colon j<k\leq n+1,i<\ell\leq j\}.
\]
It is easy to see that ${\mathfrak{g}}_{-1}={\mathfrak{g}}_{-1}^L\oplus{\mathfrak{g}}_{-1}^R$, as vector spaces, and that
\[
\dim{\mathfrak{g}}_{-1}^L=i(j-i),\quad\dim{\mathfrak{g}}_{-1}^R=(n+1-j)(j-i).
\]
It follows immediately that $\dim{\mathfrak{g}}_{-1}=(n+1-(j-i))(j-i)$.
We state a result concerning the products and matrix commutators of the elementary matrices $E_{k\ell}$.

\begin{lemma}\label{l:commAn}
The product of two elementary matrices is given by
\begin{equation}\label{eq:prodel}
E_{pq}E_{rs}=\begin{cases}E_{ps},&\mbox{if }q=r;\\0,&\mbox{otherwise.}\end{cases}
\end{equation}

The commutator between elementary matrices is given by
\begin{equation}\label{eq:commel}
[E_{pq},E_{rs}]=E_{pq}E_{rs}-E_{rs}E_{pq}=
\begin{cases}
E_{ps},&\mbox{if }p\neq s,q=r;\\-E_{rq},&\mbox{if }p=s,q\neq r;\\E_{pp}-E_{qq},&\mbox{if }p=s,q=r;\\0,&\mbox{otherwise.}
\end{cases}
\end{equation}
\end{lemma}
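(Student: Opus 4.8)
The statement to prove is Lemma~\ref{l:commAn}, which gives explicit formulas for products and commutators of elementary matrices $E_{pq}$.

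The plan is to prove this by direct computation from the definition of the elementary matrices, which is entirely elementary linear algebra. First I would establish the product formula~\eqref{eq:prodel}. Recall that $E_{pq}$ has a single nonzero entry, equal to $1$, at position $(p,q)$; equivalently, its entries are $(E_{pq})_{ab}=\delta_{pa}\delta_{qb}$, where $\delta$ is the Kronecker delta. To compute the product $E_{pq}E_{rs}$, I would apply the standard matrix multiplication rule entrywise:
\[
(E_{pq}E_{rs})_{ab}=\sum_c(E_{pq})_{ac}(E_{rs})_{cb}=\sum_c\delta_{pa}\delta_{qc}\delta_{rc}\delta_{sb}=\delta_{pa}\delta_{sb}\sum_c\delta_{qc}\delta_{rc}=\delta_{pa}\delta_{sb}\,\delta_{qr}.
\]
This product is nonzero only when $q=r$, in which case it equals $\delta_{pa}\delta_{sb}$, i.e.\ the matrix $E_{ps}$; otherwise it vanishes. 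This establishes~\eqref{eq:prodel}.

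Next I would derive the commutator formula~\eqref{eq:commel} by substituting the product formula into $[E_{pq},E_{rs}]=E_{pq}E_{rs}-E_{rs}E_{pq}$. By~\eqref{eq:prodel}, the first term $E_{pq}E_{rs}$ contributes $E_{ps}$ exactly when $q=r$, and the second term $E_{rs}E_{pq}$ contributes $E_{rq}$ exactly when $s=p$. I would then organize the result into the four cases of the statement according to the truth values of the conditions $q=r$ and $s=p$: if $q=r$ but $p\neq s$, only the first term survives, giving $E_{ps}$; if $p=s$ but $q\neq r$, only the second term survives, giving $-E_{rq}$; if both $p=s$ and $q=r$ hold, both terms survive and we get $E_{pp}-E_{rq}=E_{pp}-E_{qq}$ (since $q=r$); and if neither holds, both products vanish and the commutator is zero.

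There is no genuine obstacle here, as the lemma is a standard computational fact; the only point requiring slight care is the overlapping case $p=s$ and $q=r$, where one must track both surviving terms simultaneously and use the coincidences of indices to simplify $E_{pp}-E_{rq}$ to the stated diagonal difference $E_{pp}-E_{qq}$. I would therefore keep the presentation brief, treating the product formula as the substantive computation and deducing the commutator formula as a routine case analysis.
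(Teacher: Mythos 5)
Your proposal is correct and follows essentially the same route as the paper: both verify the product formula~\eqref{eq:prodel} by computing the entries of $E_{pq}E_{rs}$ directly from the matrix multiplication rule, and then obtain the commutator formula~\eqref{eq:commel} as an immediate consequence. Your write-up is in fact slightly more complete, since the paper dismisses the commutator as ``an easy consequence'' while you carry out the four-case analysis explicitly, including the overlapping case $p=s$, $q=r$.
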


\begin{proof}
We prove only equality~\eqref{eq:prodel}, since~\eqref{eq:commel} is an easy consequence. By the definition of the product of matrices, we have that
\[
(E_{pq}E_{rs})_{\alpha\beta}=\sum_{k=1}^n(E_{pq})_{\alpha k}(E_{rs})_{k\beta}.
\]
It is easy to see that the only entry of the matrix $E_{pq}E_{rs}$ that is not zero is $(E_{pq}E_{rs})_{ps}$. It is also clear that $(E_{pq}E_{rs})_{ps}=1$ only when $q=r$, and vanishes otherwise. This proves equality~\eqref{eq:prodel}.
\end{proof}

With Lemma~\ref{l:commAn} in hands, we can conclude that the subspaces ${\mathfrak{g}}_{-1}^L$ and ${\mathfrak{g}}_{-1}^R$ have the following structure.

\begin{coro}
The subspaces ${\mathfrak{g}}_{-1}^L$ and ${\mathfrak{g}}_{-1}^R$, endowed with the usual matrix commutator, form abelian Lie subalgebras of ${\mathfrak{g}}_{-1}$.
\end{coro}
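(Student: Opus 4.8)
The plan is to show that both $\mathfrak{g}_{-1}^L$ and $\mathfrak{g}_{-1}^R$ are closed under the matrix commutator and that the bracket restricted to each of them vanishes identically. First I would examine an arbitrary bracket $[E_{k\ell},E_{k'\ell'}]$ of two generators of $\mathfrak{g}_{-1}^L$, using the explicit formula~\eqref{eq:commel} from Lemma~\ref{l:commAn}. By definition, every generator $E_{k\ell}$ of $\mathfrak{g}_{-1}^L$ has its column index in the range $1\leq\ell\leq i$ and its row index in the range $i<k\leq j$. The key observation is that these two index ranges are disjoint, so for any two such generators we always have $k>i\geq\ell'$ and $k'>i\geq\ell$, which forces $k\neq\ell'$ and $k'\neq\ell$.

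Next I would feed these inequalities into the four cases of formula~\eqref{eq:commel}. The nonzero outcomes there require either $q=r$ (first case), or $p=s$ (second case), or $p=s$ together with $q=r$ (third case); in our notation with $p=k$, $q=\ell$, $r=k'$, $s=\ell'$, these conditions read $\ell=k'$, or $k=\ell'$, or both. Since we have just shown $k\neq\ell'$ and $k'\neq\ell$, none of the nonzero cases can occur, so we land in the last case and $[E_{k\ell},E_{k'\ell'}]=0$ for every pair of generators. By bilinearity of the commutator this extends to arbitrary elements of $\mathfrak{g}_{-1}^L$, proving that $\mathfrak{g}_{-1}^L$ is an abelian subalgebra. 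The argument for $\mathfrak{g}_{-1}^R$ is identical: its generators have row index $j<k\leq n+1$ and column index $i<\ell\leq j$, again two disjoint ranges, so the same index comparison rules out all nonzero cases of~\eqref{eq:commel}.

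I do not expect any genuine obstacle here, as the statement is a direct consequence of Lemma~\ref{l:commAn} combined with the disjointness of the row and column index ranges defining $\mathfrak{g}_{-1}^L$ and $\mathfrak{g}_{-1}^R$. The only point requiring minor care is bookkeeping: one must correctly match the indices $(p,q,r,s)$ in~\eqref{eq:commel} to the $(k,\ell,k',\ell')$ of the generators, and verify that the relevant index ranges truly do not overlap so that the equalities $q=r$ and $p=s$ are genuinely impossible. Once this disjointness is recorded, closure under the bracket is automatic since the bracket is in fact zero, and \emph{a fortiori} each subspace is a Lie subalgebra. I would close by remarking that this abelian structure is exactly what one expects, since the nonzero brackets of $\mathfrak{g}_{-1}$ land in $\mathfrak{g}_{-2}$ and come precisely from pairing a left-block generator with a right-block generator.
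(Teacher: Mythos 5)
Your proof is correct and matches the paper's (implicit) argument exactly: the corollary is stated as an immediate consequence of Lemma~\ref{l:commAn}, with the bracket of two generators of ${\mathfrak{g}}_{-1}^L$ (resp.\ ${\mathfrak{g}}_{-1}^R$) vanishing because the row-index range and the column-index range defining each block are disjoint, so neither $q=r$ nor $p=s$ can occur in~\eqref{eq:commel}. Your index bookkeeping $(p,q,r,s)=(k,\ell,k',\ell')$ and the extension by bilinearity are both accurate, so there is nothing to add.
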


Since the 2-step nilpotent Lie algebra $\g_{-2}\oplus\g_{-1}$ consists of lower triangular matrices, we have the following result.

\begin{lemma}\label{th:HtypeAn}
Let $\langle.\,,.\rangle$ be any positive definite scalar product defined on $\g={\mathfrak{sl}}(n+1,{\mathbb R})$ such that the elementary matrices
$
E_{k\ell}$, $k,\ell\in\{1,\dotsc,n+1\}$,
form an orthonormal set with respect to $\langle.\,,.\rangle$.
\begin{enumerate}
\item If $E_{k\ell}\in{\mathfrak{g}}_{-1}^R$, then $\dim\Big(\ker({\rm ad}_{E_{k\ell}})^\bot\cap{\mathfrak{g}}_{-1}\Big)= i$.

\item If $E_{k\ell}\in{\mathfrak{g}}_{-1}^L$, then $\dim\Big(\ker({\rm ad}_{E_{k\ell}})^\bot\cap{\mathfrak{g}}_{-1}\Big)= n+1-j$.
\end{enumerate}
%Where $V^\bot$ refers to the space orthogonal to $V$ with respect to the scalar product $\langle.\,,.\rangle$.
\end{lemma}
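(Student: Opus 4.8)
The plan is to reduce the statement to a rank computation for the restricted adjoint map and then read off the answer directly from the commutator relations in Lemma~\ref{l:commAn}. First I would record that, since the elementary matrices $E_{k\ell}$ are orthonormal and the three pieces $\g_{-2}$, $\g_{-1}^L$, $\g_{-1}^R$ are spanned by pairwise disjoint families of such matrices, the grading components of $\g$ are mutually orthogonal. Because ${\rm ad}_{E_{k\ell}}$ is homogeneous of degree $-1$, its kernel is a graded subspace of $\g$, so taking the orthogonal complement commutes with intersecting by $\g_{-1}$. Consequently
\[
\dim\Big(\ker({\rm ad}_{E_{k\ell}})^\bot\cap\g_{-1}\Big)=\dim\g_{-1}-\dim\big(\ker({\rm ad}_{E_{k\ell}})\cap\g_{-1}\big)={\rm rank}\big({\rm ad}_{E_{k\ell}}\colon\g_{-1}\to\g_{-2}\big),
\]
where the target is $\g_{-2}={\rm span}\{E_{k\ell}\colon j<k\leq n+1,\ 1\leq\ell\leq i\}$. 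Thus it suffices to compute the rank of this restricted adjoint map in each of the two cases.

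Next I would compute the brackets $[E_{k\ell},E_{pq}]$ for $E_{pq}$ ranging over the bases of $\g_{-1}^L$ and $\g_{-1}^R$, using only the index ranges and formula~\eqref{eq:commel}. Suppose first that $E_{k\ell}\in\g_{-1}^R$, so $j<k\leq n+1$ and $i<\ell\leq j$. Comparing ranges shows that the bracket with any $E_{pq}\in\g_{-1}^R$ vanishes (none of the coincidence conditions in~\eqref{eq:commel} can hold), while for $E_{pq}\in\g_{-1}^L$ the only surviving case is $p=\ell$, giving $[E_{k\ell},E_{\ell q}]=E_{kq}\in\g_{-2}$. As $q$ runs over $1,\dotsc,i$ these images $E_{kq}$ are distinct basis vectors of $\g_{-2}$, so the image is exactly $i$-dimensional, proving part~(1). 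The case $E_{k\ell}\in\g_{-1}^L$, with $i<k\leq j$ and $1\leq\ell\leq i$, is entirely symmetric: the bracket annihilates $\g_{-1}^L$, and on $\g_{-1}^R$ only $q=k$ survives, yielding $[E_{k\ell},E_{pk}]=-E_{p\ell}\in\g_{-2}$ for $p=j+1,\dotsc,n+1$, an $(n+1-j)$-dimensional image, proving part~(2).

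The only genuinely delicate point is the bookkeeping of index ranges that decides which of the four alternatives in~\eqref{eq:commel} applies; once the blocks $\{1,\dotsc,i\}$, $\{i+1,\dotsc,j\}$, $\{j+1,\dotsc,n+1\}$ are kept straight, every inequality such as $k>q$ or $\ell>p$ is immediate, and exactly one shared middle index produces a nonzero bracket landing in $\g_{-2}$. I expect no analytic difficulty here: the conceptual step is the reduction in the first paragraph identifying the sought dimension with the rank of the restricted adjoint, and the remaining obstacle is purely the careful case analysis over the three index blocks.
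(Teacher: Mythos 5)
Your proof is correct and takes essentially the same route as the paper: both arguments come down to the bracket bookkeeping of Lemma~\ref{l:commAn} together with the orthonormality of the $E_{k\ell}$, with the identical index-block analysis deciding which brackets survive. The only difference is cosmetic: you convert $\dim\big(\ker({\rm ad}_{E_{k\ell}})^\bot\cap\g_{-1}\big)$ into the rank of ${\rm ad}_{E_{k\ell}}\colon\g_{-1}\to\g_{-2}$ via gradedness of the kernel and rank--nullity, counting the image in $\g_{-2}$, whereas the paper identifies $\ker({\rm ad}_{E_{k\ell}})^\bot\cap\g_{-1}$ explicitly inside $\g_{-1}$ as $\spn\{E_{\ell q}\colon q\le i\}$ (case $E_{k\ell}\in\g_{-1}^R$) resp.\ $\spn\{E_{pk}\colon p>j\}$ (case $E_{k\ell}\in\g_{-1}^L$) --- two equivalent ways of counting the same nonvanishing brackets.
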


\begin{proof}
Let $E_{k\ell}\in{\mathfrak{g}}_{-1}$. It is clear from~\eqref{eq:commel}, that
$
\ker({\rm ad}_{E_{k\ell}})=\spn\{E_{pq}\colon p\neq\ell\mbox{ and }q\neq k\}$.
From this equality we can deduce that for any positive definite scalar product $\langle.\,,.\rangle$ as in the statement of the lemma, we have
\[
\ker({\rm ad}_{E_{k\ell}})^\bot=\spn\{E_{pq}\colon p=\ell\mbox{ or }q= k\}.
\]
It is easy to see that if $E_{k\ell}\in{\mathfrak{g}}_{-1}^L$, then
\[
\ker({\rm ad}_{E_{k\ell}})^\bot\cap\g_{-1}=\spn\{E_{pk}\colon p=j+1,\dotsc,n+1\},
\]
and if $E_{k\ell}\in{\mathfrak{g}}_{-1}^R$, then
$
\ker({\rm ad}_{E_{k\ell}})^\bot\cap\g_{-1}=\spn\{E_{\ell q}\colon q=1,\dotsc,i\}$.
Counting dimensions, the claim follows.
\end{proof}

\begin{theorem}\label{th:positive}
The only graded subalgebras of ${\mathfrak{sl}}(n+1,{\mathbb R})$, which are $H$-type algebras with a positive definite scalar product are the Heisenberg algebras of dimension $2n-1$.
\end{theorem}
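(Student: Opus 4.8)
The plan is to distill from the $H$-type property a purely Lie-theoretic constraint on the rank of the adjoint maps and then read off the admissible $i,j$ directly from Lemma~\ref{th:HtypeAn}. By the reformulation of Kaplan recalled above, if $\g_{-2}\oplus\g_{-1}$ carries a positive definite scalar product making it an $H$-type algebra, then for every unit $x\in\g_{-1}$ the map ${\rm ad}_x\colon\ker({\rm ad}_x)^\bot\to\g_{-2}$ is a surjective isometry. Rescaling shows that ${\rm ad}_x\colon\g_{-1}\to\g_{-2}$ is then surjective for every nonzero $x$, and since its codomain has dimension $d_2$ this forces
\[
\dim\Big(\ker({\rm ad}_x)^\bot\cap\g_{-1}\Big)={\rm rank}\,\big({\rm ad}_x\colon\g_{-1}\to\g_{-2}\big)=d_2
\]
for all nonzero $x\in\g_{-1}$. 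The essential observation is that the quantity on the left is the rank of a fixed linear map and hence does \emph{not} depend on the chosen positive definite product; consequently the dimensions computed in Lemma~\ref{th:HtypeAn} (stated there for the orthonormal product on the $E_{k\ell}$) are in fact valid for any positive definite scalar product.

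First I would apply this necessary condition to the elementary matrices spanning $\g_{-1}^L$ and $\g_{-1}^R$. Both subspaces are nonzero, since $\dim\g_{-1}^L=i(j-i)$ and $\dim\g_{-1}^R=(n+1-j)(j-i)$ are both positive by the range $1\le i<j\le n+1-i\le n$. For $x=E_{k\ell}\in\g_{-1}^R$, Lemma~\ref{th:HtypeAn} gives ${\rm rank}\,{\rm ad}_x=i$, so surjectivity requires $i=d_2$; for $x=E_{k\ell}\in\g_{-1}^L$ it gives ${\rm rank}\,{\rm ad}_x=n+1-j$, so $n+1-j=d_2$.

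Combining these two identities with the value $d_2=i(n+1-j)$ from Table~\ref{t:dim} gives $d_2=i(n+1-j)=d_2\cdot d_2$, hence $d_2(d_2-1)=0$ and therefore $d_2=1$. Substituting back, $i=1$ and $j=n$, so $\dim\g_{-1}=2n-2$ and $\dim\g_{-2}=1$; thus $\g_{-2}\oplus\g_{-1}$ is a $2$-step nilpotent Lie algebra with one-dimensional centre, i.e. the Heisenberg algebra of dimension $2n-1$. This grading is precisely the contact grading $\Sigma_{1,n}$ of Table~\ref{t:contactclass}, and the Heisenberg algebra carries a positive definite $H$-type product by Kaplan's construction, so the Heisenberg case genuinely occurs and no other $|2|$-grading can.

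The main obstacle, to my mind, is the passage from the metric-dependent statement of Lemma~\ref{th:HtypeAn} to a conclusion valid for every positive definite product; this is overcome by the remark that only the rank of ${\rm ad}_x$ intervenes in the count. A related subtlety is that it is enough to test the surjectivity on the basis vectors of $\g_{-1}^L$ and $\g_{-1}^R$ separately: this already produces a necessary condition rigid enough to pin down $i=1$ and $j=n$, so one need not analyse arbitrary vectors of $\g_{-1}^L\oplus\g_{-1}^R$.
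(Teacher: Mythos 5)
Your proposal is correct and follows essentially the same route as the paper's own proof: Kaplan's isometry criterion combined with Lemma~\ref{th:HtypeAn} applied to elementary matrices in $\g_{-1}^L$ and $\g_{-1}^R$, forcing $i=1$, $j=n$ and hence the Heisenberg algebra. Two small points in your favour: your remark that $\dim\bigl(\ker({\rm ad}_x)^\bot\cap\g_{-1}\bigr)$ equals the metric-independent rank of ${\rm ad}_x|_{\g_{-1}}$ makes explicit why the lemma, stated for the orthonormal product on the $E_{k\ell}$, applies to an arbitrary positive definite product; and your value $j=n$ is the right one --- the printed proof's ``$j=n-1$'' is evidently a typo, since $\Sigma_{1,n}$ is the contact grading with $\dim\g_{-2}=1$ and $\dim\g_{-1}=2n-2$.
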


\begin{proof}
With the notations as in Lemma~\ref{th:HtypeAn}, the map
$
{\rm ad}_{E_{k\ell}}\colon\ker({\rm ad}_{E_{k,\ell}})^\bot\cap{\mathfrak{g}}_{-1}\to{\mathfrak{g}}_{-2}
$
is surjective for any $E_{k\ell}\in{\mathfrak{g}}_{-1}$ only when $i=1$ and $j=n-1$. 
\end{proof}

%%%%%%%%%%%%%%%%%%%%%%%%%%%%%%%%%

\section{Prolongations of $\n^{r,s}$}\label{exAppendix}

%%%%%%%%%%%%%%%%%%%%%%%%%%%%%%%%%
 
We recall some definitions and basic facts on prolongations of graded Lie algebras
following~\cite{Mo88}.  By a graded Lie algebra we mean a Lie algebra $\g$ endowed with a $\mathbb Z$-grading:
$\g = \bigoplus _{ p \in \mathbb Z } \g_p$. We denote the negative part of $\g$ by 
$\g_- = \bigoplus_{ p < 0 } \g_p $. A graded Lie algebra $\g$ is called { \it transitive}  
if $\dim \g_ - < \infty $ and  if it satisfies the following condition:
$$
\text {if }  p \ge 0, \ x \in \g_p,  \text{ and } [x, \g_-] = 0, \text {then } x = 0 .
$$
The largest  integer $\mu$ such that $\g_{-\mu} \neq 0 $ is called the depth of $\g$.
 The vector 
 $$(\dim\g_{-\mu},\dim\g_{-\mu+1},\ldots,\dim\g_{p},\ldots)
 $$ 
 is called the growth vector of $\g$.

Given a transitive graded Lie algebra $\g = \bigoplus_{ p \in \mathbb Z } \g_p $, then for each $k \ge 0 $ 
there exists a unique maximal transitive graded Lie algebra $\tilde{\g} = \bigoplus_{ p \in \mathbb Z } \tilde{\g}_p$ up to isomorphisms
such that $\tilde{\g}_p = \g_p $ for $p < k$, which is called the prolongation (or Tanaka prolongation)
of the truncated graded Lie algebra $\bigoplus _{p < k } \g_p$.
The prolongation $\tilde{\g}$ is determined inductively by 
$$
\tilde{\g} _{p+1}  = \{ z \in  \Hom ( \g_-, \tilde{\g})_{p+1}
\vert\  [z (x),y] + [x,z(y) ] = z ([x, y] ),\  \text{for all}\ x, y \in \g_-  \},
$$
where $\Hom(*\,, * ) _p$ denotes the set of homogeneous degree $p$ linear maps.
The bracket of $\tilde{\g}$ is defined inductively to satisfy the Jacobi identity
and $z(x) = [z, x] $ for $z \in \tilde{\g}_q $,  $ x \in \g_- $.
The notion of prolongation of depth $\mu = 1$,  had been fundamental in geometry 
and in the theory of partial differential equations, see for instance~\cite{GuilSt}.
This  was generalized to the case of depth greater than 1 in~\cite{Ta} 
 and has played important role in nilpotent geometry and analysis~\cite{Mo2}.

Note that a transitive graded Lie algebra $\g $ is the prolongation of a truncated graded algebra $\bigoplus _ { p < k } \g_p $ if and only if the generalized Spencer cohomology group $H^1 _r ( \g_-, \g)$ associated with the representation of $\g_-$  on $\g$ vanishes for $ r \ge k $.
 
In the case of a simple Lie algebra $\g$, the Spencer cohomology group $ H^p _r ( \g_-, \g) $
may be computed by the method of Kostant~\cite{Kost},  and  Yamaguchi~\cite{Y} carried out
the computation for $H^1$, $H^2$, according to which a simple graded Lie algebra 
 is a prolongation of $\g_-$ except for a few cases.

If a transitive graded Lie algebra $\g= \bigoplus _{ p \in \mathbb Z } \g_p$ is simple and finite dimensional, then the Killing form $B$ of $\g$ satisfies $B( \g_p, \g_q ) = 0 $ if $p+q \neq 0 $, and therefore, the grading is symmetric; that is $B$ is a non-degenerate pairing between $\g_p$ and $\g_{-p}$.

The program [TanakaProlongation] in the Maple Software elaborated by Anderson~\cite{AnI} allows
one to calculate the prolongation of a truncated transitive graded Lie algebra 
$\bigoplus _{ p < k } \g_p $.
If one enters into the Maple the data of a truncated graded Lie algebra: a basis and the structure constants, then the [TanakaProlongation] gives back a basis and the structure equations of the prolongation. In particular, we get the dimension of the prolongation at each order. 

We used Maple to calculate the prolongation of  
the basic pseudo $H$-type Lie algebras $\n^{r,s}=\n_{-2}\oplus\n_{-1}=\mathbb R^{r,s}\oplus\n_{-1}$ for $r+s \le 8$, where $\n_{-1}$ is the minimal dimensional 
admissible module of the Clifford algebra $\Cl(\mathbb R^{r,s},\langle.\,,.\rangle_{r,s})$. 
The Lie algebra data of $\n^{r,s} $ are taken from the tables of structure constants
found in~\cite{FM3}. 
We collect the results of computation in Tables~\ref{T8} and~\ref{T9},
where $\g=\rm{Prol(\n)}$ is the prolongation of a pseudo $H$-type Lie algebra $\n$.  
\begin{table}[h]
\caption{Prolongation data for $n^{r,s}$ with $\dim\n_{-2}=1,2,3,4$.}
\begin{tabular}{|c|c|c|c|c|}
\hline
$\dim\n_{-2}$&$\n$&Growth vector of $\g$&$\g$&$\g\otimes{\mathbb C}$\\
\hline
1&$\n^{1,0}$&$(1,2,4,6,9,\dotsc)$&${\rm ct}(3,{\mathbb R})$&${\rm ct}(3,{\mathbb C})$\\
\hline
&$\n^{0,1}$&$(1,2,4,6,9,\dotsc)$&${\rm ct}(3,{\mathbb R})$&${\rm ct}(3,{\mathbb C})$\\
\hline
2&$\n^{2,0}$&$(2,4,8,12,18,\dotsc)$&${\rm ct}(3,{\mathbb C})_{\mathbb R}$&${\rm ct}(3,{\mathbb C})\oplus{\rm ct}(3,{\mathbb C})$\\
\hline
&$\n^{1,1}$&$(2,4,8,12,18,\dotsc)$&${\rm ct}(3,{\mathbb R})\oplus{\rm ct}(3,{\mathbb R})$&${\rm ct}(3,{\mathbb C})\oplus{\rm ct}(3,{\mathbb C})$\\
\hline
&$\n^{0,2}$&$(2,4,8,12,18,\dotsc)$&${\rm ct}(3,{\mathbb C})_{\mathbb R}$&${\rm ct}(3,{\mathbb C})\oplus{\rm ct}(3,{\mathbb C})$\\
\hline
3&$\n^{3,0}$&$(3,4,7,4,3)$&${\mathfrak{sp}}(2,1)$&${\mathfrak{sp}}({6,\mathbb C})$\\
\hline
&$\n^{2,1}$&$(3,8,14,8,3)$&${\mathfrak{sp}}({8,\mathbb R})$&${\mathfrak{sp}}({8,\mathbb C})$\\
\hline
&$\n^{1,2}$&$(3,4,7,4,3)$&${\mathfrak{sp}}({6,\mathbb R})$&${\mathfrak{sp}}({6,\mathbb C})$\\
\hline
&$\n^{0,3}$&$(3,8,14,8,3)$&${\mathfrak{sp}}({2,2})$&${\mathfrak{sp}}({8,\mathbb C})$\\
\hline
4&$\n^{4,0}$&$(4,8,11,8,4)$&${\mathfrak{sl}}(3,{\mathbb H})$&${\mathfrak{sl}}({6,\mathbb C})$\\
\hline
&$\n^{3,1}$&$(4,8,11,8,4)$&${\mathfrak{su}}(4,2)$&${\mathfrak{sl}}({6,\mathbb C})$\\
\hline
&$\n^{2,2}$&$(4,8,11,8,4)$&${\mathfrak{sl}}({6,\mathbb R})$&${\mathfrak{sl}}({6,\mathbb C})$\\
\hline
&$\n^{1,3}$&$(4,8,11,8,4)$&${\mathfrak{su}}(3,3)$&${\mathfrak{sl}}({6,\mathbb C})$\\
\hline
&$\n^{0,4}$&$(4,8,11,8,4)$&${\mathfrak{sl}}(3,{\mathbb H})$&${\mathfrak{sl}}({6,\mathbb C})$\\
\hline
\end{tabular}\label{T8}
\end{table}

\begin{table}[h]
\caption{Growth vectors for the prolongation of some pseudo $H$-type algebras with $\dim\n_{-2}=5,6,7,8$.}
\begin{tabular}{|c|c|c|c|c|}
\hline
$\dim\n_{-2}$&$\n$&Growth vector of $\g$&$\g$&$\g\otimes{\mathbb C}$\\
\hline
5&$\n^{5,0}$&$(5,8,12)$&&\\
\hline
&$\n^{4,1}$&$(5,16,17)$&&\\
\hline
&$\n^{3,2}$&$(5,8,12)$&&\\
\hline
&$\n^{2,3}$&$(5,8,12)$&&\\
\hline
&$\n^{1,4}$&$(5,8,12)$&&\\
\hline
&$\n^{0,5}$&$(5,16,17)$&&\\
\hline
6&$\n^{6,0}$&$(6,8,16)$&&\\
\hline
&$\n^{5,1}$&$(6,16,18)$&&\\
\hline
&$\n^{4,2}$&$(6,16,18)$&&\\
\hline
&$\n^{3,3}$&$(6,8,16)$&&\\
\hline
&$\n^{2,4}$&$(6,8,16)$&&\\
\hline
&$\n^{1,5}$&$(6,16,18)$&&\\
\hline
&$\n^{0,6}$&$(6,16,18)$&&\\
\hline
7&$\n^{7,0}$&$(7,8,22,8,7)$&$F_{\rm{II}}$&$F_4$\\
\hline
&$\n^{3,4}$&$(7,8,22,8,7)$&$F_{\rm{I}}$&$F_4$\\
\hline
%&$\n^{0,7}$&$(7,16,23)$\\\hline
8&$\n^{8,0}$&$(8,16,30,16,8)$&$E_{\rm{IV}}$&$E_6$\\
\hline
&$\n^{7,1}$&$(8,16,30,16,8)$&$E_{\rm{III}}$&$E_6$\\
\hline
&$\n^{4,4}$&$(8,16,30,16,8)$&$E_{\rm{I}}$&$E_6$\\
\hline
&$\n^{3,5}$&$(8,16,30,16,8)$&$E_{\rm{II}}$&$E_6$\\
\hline
\end{tabular}\label{T9}
\end{table}

We finish our work stating several remarks.

1. The form of the growth vector of $\g$ proposes a conjecture whether $\g$ is simple or not, and then this can be verified rigorously. If it is simple, then one can identify the complex simple Lie algebra such that it is isomorphic to the complexification of $\g$. However, it is not easy to identify the real form of $\g$ related to the pseudo $H$-type Lie algebra $\n$. This requires a careful case by case analysis.

2. If $\dim\n_{-2}$ is equal to $1$ or $2$, then the pseudo $H$-type Lie algebra is related to the contact algebra. We denote by $\rm{ct}(2n+1,\mathbb F)$ the contact algebra over the field $\mathbb F$ of degree~$n$. The contact algebra is a simple 
infinite Lie algebra. It is known that the contact algebra is the prolongation of its negative part which is the Heisenberg algebra~\cite{Ta}. We refer to~\cite{Mo88} for more detailed structure of the contact algebra, its gradation and growth vector to compare the Maple computation to any higher order.

3. If $\dim \n_{-2}$ is equal to 3 or 4,
the growth vector of $\g=\rm{Prol}(\n^{r,s})$ is symmetric for all $(r,s)$ with $r+s = 3$ or $4$
and $\g$ is a simple graded Lie algebra. 
The complexification $\g\otimes \mathbb C$ is determined as in Table~\ref{T8},
or more precisely,
$\big(\frak{sp} (6, \mathbb C), \Sigma_{  \alpha_2 }\big)$,
$\big(\frak{sp} (8, \mathbb C), \Sigma_{\alpha_2}\big)$,
$\big(\frak{sl} (6,\mathbb C), \Sigma_{\alpha_1,\alpha_2}\big)$,
respectively, as graded Lie algebras. 
The real forms corresponding to $\g$
were found by a careful study. An explicit isomorphism between $\n^{1,3} $ and $\mathfrak {su} (3, 3) $ has been described in Section~\ref{sec:example}.
Similar calculations can be performed in other cases.

4. If $\dim \n_{-2} $ is equal to 5 or 6, the computation shows that the prolongation
$\g=\rm{Prol}(\n^{r,s}) $ all vanishes at order 1  for $r+ s = 5, 6$
and therefore is not simple. 
This seems to be a particular case of our general conjecture: {\it for any pseudo $H$-type algebra $\n$,  the prolongation $\g=\rm{Prol(\n)}$ vanishes at order 1 if it does not contain a simple graded Lie algebra of depth 2.}
As a result of computations, combined with the analysis of Table \ref{t:dim},
shows that the negative part of $\frak{so}(2n, \mathbb C) $ cannot be the complexification of
a non-Heisenberg pseudo $H$-type algebra for $n\leq 6$. 

5. In the case $\dim \n_{-2} $  is equal to 7 or 8, we list in Table~\ref{T9} the pseudo $H$-type 
Lie algebras $\n^{r,s} $ with $r+s = 7, 8$ for which the growth vector 
of $\g=\rm{Prol( \n^{r,s}))}$ is symmetric, which means that $\g$ is simple.  
The complexifications $\g \otimes \mathbb C$ are determined 
to be 
$(F_4, \Sigma_{ \alpha_4  } )$ and 
$(E_6, \Sigma{ \alpha_1 , \alpha_4 })$,
respectively,
as it was explained in the proof of Theorem~\ref{th:exH}.
The real simple Lie algebras $\g$, identified through group theoretic observation,
are listed in Table \ref{T9}. For the notation of real forms  in Table~\ref{T9} see~\cite[p. 534]{Helg}.
A detailed study of real forms will be developed in forthcoming research.
\\

{\sc Acknowledgment}.
We thank Ian Anderson for kindly introducing us to the DifferentialGeometry software and the  program for computing the Tanaka prolongation.

%%%%%%%%%%%%%%%%%%%%%%%%%%

\end{document}